\newtheorem{proposition}{Proposition}[section]
\newtheorem{definition}{Definition}[section]
\newtheorem{theorem}{Theorem}[section]
\newtheorem{rem}{Remark}[section]
\newcommand{\Supp}{{\rm Supp\,}}
\newcommand{\N}{{\mathbb N}}
\newcommand{\R}{{\mathbb R}}
\newcommand{\oline}{\overline}
\newcommand{\g}{\gamma}
\title{No loss of derivatives for hyperbolic operators with Zygmund-continuous coefficients in time}
\author{Ferruccio Colombini, Daniele Del Santo, Francesco Fanelli}
\date{\today}
\begin{document}

\maketitle
%%%%%%%%%%%%%%%%%%%%%%%%%%%%%%%%%%%%%%
%%%%%%%%%%%%%%%%%%%%%%%%%%%%%%%%%%%%%
\section{Introduction}
%%%%%%%%%%%%%%%%%%%%%%%%%%%%%%%%%%%%%%
%%%%%%%%%%%%%%%%%%%%%%%%%%%%%%%%%%%%%%
Consider the second order strictly hyperbolic operator 
% second order strictly hyperbolic 
%\begin{equation}\label{defL}
$$
L= \partial_t^2 -\sum_{j,k=1}^n \partial_j(a_{jk}(t,x)\partial_k ),
$$
%\end{equation}
with
%, for all $(t,x)\in [0,T]\times \R^n$ and $\xi\in \R^n$, 
$$
0<\lambda_0 |\xi|^2\leq \sum_{j,k=1}^n a_{jk}(t,x)\xi_j\xi_k \leq \Lambda_0|\xi|^2 \qquad \text{and}\qquad 
a_{jk}(t,x)=a_{kj}(t,x).
$$
It is well-known that, if the coefficients $a_{jk}$ are Lipschitz-continuous in $t$ and measurable in $x$, then the Cauchy problem related to $L$ is well-posed in the energy space.
%an energy estimate holds.
In particular, a constant $C>0$ exists, such that
\begin{equation}\label{estnoloss}
\begin{array}{ll}
\displaystyle{\sup_{0\leq t \leq T} (\|u(t,\cdot)\|_{H^{1}}+ \|\partial_t u(t,\cdot)\|_{L^2})}\\[0.2 cm]
\qquad\qquad\displaystyle{
\leq C(\|u(0,\cdot)\|_{H^{1}}+ \|\partial_t u(0,\cdot)\|_{L^2}+\int_0^T \|Lu(s, \cdot)\|_{L^2}\, ds),}
\end{array}
\end{equation}
for all $u\in C([0,T]; H^1)\cap C^1([0,T]; L^2)$ with $Lu\in 
L^1([0,T]; L^2)$ (see \cite[Ch. IX]{H}, \cite{HS}).

In this note we are interested in second order strictly hyperbolic operators having \emph{non Lipschitz-continuous} coefficients
with respect to time.

After the pioneering paper by Colombini, De Giorgi and Spagnolo \cite{CDGS},
%, devoted (together with other things) to 
%operators with coefficients depending only on $t$ and satisfying a regularity condition of the type
%$$
%\int_0^{T-\tau} |a_{jk}(t+\tau)-a_{jk}(t)| \, dt \leq C \tau \, (1+\log \frac{1}{\tau})
%$$
%for all $\tau\in \,]0,T[$, 
this topic 
has been widely studied.
% and many results have been obtained.
A result of particular interest has been obtained in  \cite{CL}, where it was proved that, if the coefficients are $\log$-Lipschitz-continuous with respect to $t$ and $x$, i.e.
there exists $C>0$ such that
$$
%\begin{equation}
\sup_{t,x} |a_{jk}(t+\tau, x+y)-a_{jk}(t,x)|\leq C (|\tau|+|y|)(1+\log \frac{1}{|\tau|+|y|}),
%\label{logLip}
%\end{equation}
$$ 
then (\ref{estnoloss}) is no more valid, but the following weaker energy estimate can be recovered: 
\begin{equation}\label{estloss}
\begin{array}{ll}
\displaystyle{\sup_{0\leq t \leq T} (\|u(t,\cdot)\|_{H^{1-\theta-\beta t}}+ \|\partial_t u(t,\cdot)\|_{H^{-\theta-\beta t}})}\\[0.2 cm]
\qquad\displaystyle{
\leq C(\|u(0,\cdot)\|_{H^{1-\theta}}+ \|\partial_t u(0,\cdot)\|_{H^{-\theta}}+\int_0^T \|Lu(s, \cdot)\|_{H^{-\theta-\beta s}}\, ds),}
\end{array}
\end{equation}
for some constants $C>0$, $\beta>0$ and for all $u\in C^2([0,T]; H^\infty)$ and $\theta\in\; ]0,\,\frac{1}{4}[$.
%\fra{I changed $\theta$ in $-\theta$ in the previous estimate: it seems to me that with $\theta$ this was not correct. Please check!}
Remark that, while in (\ref{estnoloss}) the norms of $u(t)$ and $\partial_t u(t)$ are estimated by the same norms of $u(0)$ and $\partial_t u(0)$, in (\ref{estloss}) the Sobolev spaces in which $u(t)$ and $\partial_t u(t)$ are measured are different and bigger than the spaces in which initial data are, so the estimate is less effective. This phenomenon goes under the name of {\it loss of derivatives}.
We refer e.g. to the introductions of \cite{CDSFM1}, \cite{CDSFM2} for more details and references about this problem.

Using a result obtained by Tarama in \cite{T} (see also Remark \ref{r:Tarama} below), it is possible to prove that  if the coefficients depend only on $t$ and are Zygmund-continuous, i.e.
\begin{equation} \label{hyp:Z}
 \sup_t |a_{jk}(t+\tau)+a_{jk}(t-\tau)-2a_{jk}(t)| \, dt \leq C_2 |\tau|,
\end{equation}
then (\ref{estnoloss}) is valid. Notice that the Zygmund assumption is weaker than the Lipschitz one. In \cite{CDSFM2}, the authors and M\'etivier proved that if the coefficients depend also on the space variable
and verify an isotropic Zygmund assumption (i.e. they are Zygmund-continuous both in time and space variables), then the Cauchy problem is well-posed with no loss, but only in the space
$H^{1/2}\times H^{-1/2}$. In particular, an estimate similar to \eqref{estnoloss} holds true, up to replacing the $H^1$ and $L^2$ norms respectively with the $H^{1/2}$ and $H^{-1/2}$ norms.
See also Remark \ref{r:iso-Zyg} below for more details.

The problem whether a Zygmund assumption both in time and space is still enough to recover well-posedness in general spaces $H^s\times H^{s-1}$ (and not only for $s=1/2$) remains at present
largely open.
As a partial step in this direction, in this note we consider a stronger hypothesis with respect to the space variable: namely we prove that, if the coefficients are Zygmund-continuous
with respect to $t$ and Lipschitz-continuous  with respect to $x$, then an estimate without loss of derivatives, similar to (\ref{estnoloss}), holds true. Then, the Cauchy problem relatd to
$L$ is well-posed in any space $H^s\times H^{s-1}$, for all $s\in\,]0,1]$.

Two are the main ingredients of the proof of our result. The first one is to resort to Tarama's idea of introducing a new type of energy associated to operator $L$: this new energy is
equivalent to the classical energy, but it contains a lower order term, whose goal is to produce special algebraic cancellations, which reveal to be fundamental in the energy estimates.
The second main ingredient, already introduced in \cite{CDSFM1} and \cite{CDSFM2}, is the use of paradifferential calculus with parameters (see e.g. \cite{MZ}, \cite{M2}), in order to deal
with coefficients depending also on $x$ and having low regularity in that variable.

\medbreak
We conclude this introduction with a short overview of the paper. In the next section we fix our hypotheses and state our main result, see Theorem \ref{theorem}. In Section \ref{s:tools} we collect some
elements of Littlewood-Paley theory, which are needed in the description of the functional classes where the coefficients belong to, and in the construction of paradifferential calculus with parameters.
With those tools at hand, we tackle the proof of Theorem \ref{theorem}, which is carried out in Section \ref{s:proof}.

%%%%%%%%%%%%%%%%%%%%%%%%%%%%%%%%%%%%%%
%%%%%%%%%%%%%%%%%%%%%%%%%%%%%%%%%%%%%
\section{Main result} \label{s:result}
%%%%%%%%%%%%%%%%%%%%%%%%%%%%%%%%%%%%%%
%%%%%%%%%%%%%%%%%%%%%%%%%%%%%%%%%%%%%%
Given $T>0$ and an integer $n\geq 1$,  let $L$ be the linear differential operator defined on $[0,T]\times\R^n$ by
\begin{equation}
Lu= \partial_t^2 u -\sum_{j,k=1}^n \partial_j(a_{jk}(t,x)\partial_k u), \label{hyp eq}
\end{equation}
where, for all $j, \,k=1,\dots,n$,
\begin{equation}
a_{jk}(t,x)=a_{kj}(t,x), \label{sym cond}
\end{equation}
and there exist $\lambda_0,\,\Lambda_0>0$ such that
\begin{equation}
\lambda_0 |\xi|^2\leq \sum_{j,k=1}^n a_{jk}(t,x)\xi_j\xi_k \leq \Lambda_0|\xi|^2 ,\label{ellipticity cond}
\end{equation}
for all $(t,x)\in [0,T]\times \R^n$ and for all $\xi\in \R^n$. 
Suppose moreover that there exist constants $C_0,\, C_1>0$ such that, for all $j, \,k=1,\dots,n$ and for all $\tau\in\R$, $y\in \R^n$,
\begin{equation}
\sup_{t,x} |a_{jk}(t+\tau, x)+a_{jk}(t-\tau, x)-2a_{jk}(t,x)|\leq C_0|\tau|,
\label{Zyg cont}
\end{equation}
\begin{equation}
\sup_{t,x} |a_{jk}(t, x+y)-a_{jk}(t,x)|\leq C_1|y|.
\label{Lip cont}
\end{equation}

We can now state the main result of this paper.
%%%%%%%%%%%%%%%%%%%%%%%%%%%%%%%%%
%%%%%%%%%% MAIN STATEMENT %%%%%%%%%%%%
%%%%%%%%%%%%%%%%%%%%%%%%%%%%%%%%
\begin{theorem} \label{theorem}
Under the previous hypotheses, for all fixed $\theta\in [0,1[$, there exists a constant $C>0$, depending only on $\theta$ and $T$, such that
\begin{equation}
\begin{array}{ll}
\displaystyle{\sup_{0\leq t \leq T} (\|u(t,\cdot)\|_{H^{1-\theta}}+ \|\partial_t u(t,\cdot)\|_{H^{-\theta}})}\\[0.2 cm]
\qquad\qquad\displaystyle{
\leq C(\|u(0,\cdot)\|_{H^{1-\theta}}+ \|\partial_t u(0,\cdot)\|_{H^{-\theta}}+\int_0^T \|Lu(s, \cdot)\|_{H^{-\theta}}\, ds),}
\end{array}
\label{en est}
\end{equation}
for all $u\in C^2([0,T], H^{\infty}(\R^n))$.
\end{theorem}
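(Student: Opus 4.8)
The strategy is to combine Tarama's modified-energy trick with paradifferential calculus with parameters. First I would reduce matters to proving the estimate for $\theta=0$ (i.e.\ in $H^1\times L^2$), since the general case follows by conjugating the operator with the Fourier multiplier $\langle D\rangle^{-\theta}$, which commutes with $\partial_t^2$ and only produces a zero-order commutator with the spatial part that is harmless. Fix a parameter $\gamma\geq 1$, set $\langle\xi\rangle_\gamma^2=\gamma^2+|\xi|^2$, and for each $j,k$ replace $a_{jk}(t,x)$ by its paraproduct $T_{a_{jk}(t,\cdot)}$ acting in $x$; the paralinearization error is a remainder that is smoothing of order one in space (because $a_{jk}$ is Lipschitz in $x$), hence bounded $H^1\to L^2$ with a constant controlled uniformly in $t$, and it can be absorbed into the right-hand side by Gronwall. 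So one is left with the operator with symbol $\partial_t^2+\sum_{j,k}\xi_j\xi_k a_{jk}(t,x)$ realized paradifferentially.

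Next, following Tarama, I would introduce a regularization in time of the coefficients: smooth $a_{jk}(t,x)$ in $t$ at scale $\varepsilon\sim 1/\langle\xi\rangle_\gamma$ (a Littlewood--Paley--type localization), obtaining approximate coefficients $a_{jk,\varepsilon}$ for which one controls, thanks precisely to the Zygmund hypothesis \eqref{Zyg cont}, not only $|a_{jk}-a_{jk,\varepsilon}|\lesssim \varepsilon$ but crucially $|\partial_t^2 a_{jk,\varepsilon}|\lesssim 1/\varepsilon$ (the Zygmund condition is exactly what upgrades the naive bound $1/\varepsilon^2$ to $1/\varepsilon$). Then define the Tarama-type energy: roughly, writing $\alpha_{jk}=\sqrt{a_{jk,\varepsilon}}$-type quantities (more precisely working with the full symbol $a(t,x,\xi)=\sum a_{jk}\xi_j\xi_k$ and its square root $\sqrt{a_\varepsilon}$), set
\begin{equation*}
E(t)=\big\|\partial_t u - \tfrac{\partial_t \sqrt{a_\varepsilon}}{2\sqrt{a_\varepsilon}}\,u\big\|_{L^2}^2 + \big\|\sqrt{a_\varepsilon}\,u\big\|_{L^2}^2 + \gamma^2\|u\|_{L^2}^2,
\end{equation*}
interpreted paradifferentially with parameter $\gamma$. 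This energy is equivalent to the standard one $\|\partial_t u\|_{L^2}^2+\|u\|_{H^1_\gamma}^2$ uniformly in $\gamma,\varepsilon$ (by ellipticity \eqref{ellipticity cond} and the Lipschitz-in-$x$ bound controlling the symbol classes). The point of the correction term $-\tfrac{\partial_t\sqrt{a_\varepsilon}}{2\sqrt{a_\varepsilon}}u$ is that, when one differentiates $E$ in $t$, the worst terms involving $\partial_t a_\varepsilon$ cancel algebraically, and what survives involves only $\partial_t^2 a_\varepsilon$ (bounded by $1/\varepsilon\sim\langle\xi\rangle_\gamma$, i.e.\ order one after the energy normalization) and the difference $a-a_\varepsilon$ paired against $u$ (order $\varepsilon\langle\xi\rangle_\gamma^2\sim\langle\xi\rangle_\gamma$, again order one). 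Therefore $\frac{d}{dt}E(t)\leq C\,E(t)+C\|Lu(t)\|_{L^2}\sqrt{E(t)}$, and Gronwall closes the estimate with a constant depending only on $T$ (and on $\theta$ after undoing the reduction).

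The main obstacle, and where the bulk of the technical work lies, is handling the $x$-dependence: the symbolic calculus identities that make Tarama's cancellation exact for coefficients depending only on $t$ become, for $x$-dependent coefficients, only approximate, with errors coming from symbol-product remainders in the paradifferential calculus with parameters. One must verify that every such commutator/remainder term is of order $\leq 1$ uniformly in $\gamma$ — this is exactly where the Lipschitz regularity in $x$ (rather than merely Zygmund in $x$) is used, since it places the coefficients in a symbol class $\Gamma^0_1$ with the log-loss replaced by an honest $O(1)$ bound, so that first-order paradifferential remainders land in $\Gamma^0$ and act boundedly on $L^2$. A secondary technical point is commuting the time-regularization with the energy functional and with $\partial_t$, and checking that $\partial_t(\text{Op}(\sqrt{a_\varepsilon}))$ behaves as expected; these are routine once the symbol bounds $|\partial_t^j a_\varepsilon|\lesssim \varepsilon^{1-j}$ ($j=0,1,2$) from the Zygmund hypothesis are in hand. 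I expect the Littlewood--Paley preliminaries of Section~\ref{s:tools} to supply precisely the parameter-dependent paradifferential machinery needed for these steps.
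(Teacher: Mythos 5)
Your plan follows essentially the same route as the paper: a frequency-localized Tarama-type energy built from the time-regularized symbol $\alpha_\varepsilon=(\gamma^2+|\xi|^2)^{-1/2}(\gamma^2+\sum_{j,k} a_{jk,\varepsilon}\xi_j\xi_k)^{1/2}$ with $\varepsilon=2^{-\nu}$ on the $\nu$-th dyadic block, quantized by paradifferential calculus with parameter $\gamma$, combined with a commutator estimate for $[\Delta_\nu,T_{a_{jk}}]$, a paralinearization remainder bound using the Lipschitz regularity in $x$, and Gr\"onwall (the paper keeps $\theta$ via the weights $2^{-2\nu\theta}$ in the total energy rather than conjugating by $\langle D\rangle^{-\theta}$, but that is immaterial). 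Two details would need fixing in a full write-up: the first component of the energy must carry the Tarama weight on the time derivative, i.e.\ $T_{\alpha_\varepsilon^{-1/2}}\partial_t u_\nu-T_{\partial_t(\alpha_\varepsilon^{-1/2})}u_\nu$ rather than your unweighted $\partial_t u-\frac{\partial_t\sqrt{a_\varepsilon}}{2\sqrt{a_\varepsilon}}u$ (the exact cancellation of the $\partial_t a_\varepsilon$-terms depends on these precise weights and signs), and under the Zygmund hypothesis one only has $|\partial_t a_\varepsilon|\lesssim 1+\log(1/\varepsilon)$, not $O(\varepsilon^{0})$ --- which is exactly why the correction term is indispensable, and which still suffices for the equivalence of the modified and standard energies since $\log(2^{\nu})\lesssim 2^{\nu}$.
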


Some remarks are in order.
\begin{rem} \label{r:Tarama}
If the coefficients $a_{jk}$ depend only on $t$, this result has been obtained by Tarama in \cite{T}, under the hypothesis that there exists a constant $C_2>0$ such that, for all $j, \,k=1,\dots,n$ and for all $\tau\in \,]0,T/2[$,
\begin{equation}
\int_\tau^{T-\tau} |a_{jk}(t+\tau)+a_{jk}(t-\tau)-2a_{jk}(t)| \, dt \leq C_2 \tau.
\label{hyp int}
\end{equation}
Tarama's hypothesis is weaker than ours, but, when coefficients depend also on the space variable, it is customary to take a pointwise condition with respect to time, like in (\ref{Zyg cont}) above
(see also \cite{CL}, \cite{CM}, \cite{CDSFM1}, \cite{CDSFM2} in this respect).
In particular, it is not clear at present whether or not the pointwise condition (\ref{Zyg cont}) can be relaxed to an integral one, similar to (\ref{hyp int}), in our framework.
\end{rem} 

\begin{rem} \label{r:iso-Zyg}
If the hypoteses (\ref{Zyg cont}) and  (\ref{Lip cont}) are replaced by the weaker following one:  there exists a constant $C_3>0$ such that, for all $j, \,k=1,\dots,n$ and for all $\tau\in\R$, $y\in \R^n$,
\begin{equation}
\sup_{t,x} |a_{jk}(t+\tau, x+y)+a_{jk}(t-\tau, x-y)-2a_{jk}(t,x)|\leq C_3(|\tau|+|y|),
\label{Zyg xt cont}
\end{equation}
the estimate (\ref{en est}) has been proved, only in the case of $\theta=1/2$, by the present authors and M\'etivier in \cite{CDSFM2}.
\end{rem} 

\begin{rem}
Assume (\ref{Zyg cont}) and the following hypothesis:  there exists a constant $C_4>0$ such that, for all $j, \,k=1,\dots,n$ and for all $y\in \R^n$ with $0<|y|\leq 1$,
\begin{equation}
\sup_{t,x} |a_{jk}(t, x+y)-a_{jk}(t,x)|\leq C_4|y| (1+\log \frac{1}{|y|}).
\label{log Lip cont}
\end{equation}
As a consequence of a result of the present authors and M\'etivier in \cite{CDSFM1} (stated for coefficients which are actually log-Zygmund with respect to time), one get that,
for all fixed $\theta\in \,]0,1[$, there exist a $\beta>0$, a time $T'>0$ and  a constant $C>0$ such that
\begin{equation}
\begin{array}{ll}
\displaystyle{\sup_{0\leq t \leq T'} (\|u(t,\cdot)\|_{H^{1-\theta-\beta t}}+ \|\partial_t u(t,\cdot)\|_{H^{-\theta-\beta t}})}\\[0.2 cm]
\qquad\displaystyle{
\leq C(\|u(0,\cdot)\|_{H^{1-\theta}}+ \|\partial_t u(0,\cdot)\|_{H^{-\theta}}+\int_0^{T'} \|Lu(s, \cdot)\|_{H^{-\theta-\beta s}}\, ds),}
\end{array}
\label{en with loss est}
\end{equation}
for all $u\in C^2([0,T'], H^{\infty}(\R^n))$. The condition (\ref{log Lip cont}) is weaker than (\ref{Lip cont}) but also 
(\ref{en with loss est}) is weaker than (\ref{en est}): (\ref{en with loss est}) has a loss of derivatives, while (\ref{en est}) performs no loss.
In addition, observe that (\ref{en with loss est}) holds only for $\theta\in \,]0,1[$, while (\ref{en est}) holds also for $\theta=0$. 
\end{rem} 
%%%%%%%%%%%%%%%%%%%%%%%%%%%%%%%%%%%%%%%%%%%%%%%%%%%
%%%%%%%%%%%%%%%%%%%%%%%%%%%%%%%%%%%%%%%%%%%%%%%%%%%%
%%%%%%%%%%%%%%%%%%%%%%%%%%%%%%%%%%%%%%%%%%%%%%%%%%%%
\section{Preliminary results} \label{s:tools}
We briefly list here some tools we will need in the proof of the main result. We follow closely the presentation of these topics given in  \cite{CDSFM1} and  \cite{CDSFM2}.
%%%%%%%%%%%%%%%%%%%%%%%%%%%%%%%%%%%%%%%%%%%%%%%%%%%%%
%%%%%%%%%%%%%%%%%%%%%%%%%%%%%%%%%%%%%%%%%%%%%%%%%%%%%
%%%%%%%%%%%%%%%%%%%%%%%%%%%%%%%%%%%%%%%%%%%%%%%%%%%%%
\subsection{Littlewood-Paley decomposition}
%%%%%%%%%%%%%%%%%%%%%%%%%%%%%%%%%%%%%%%%%%%%%%%%%%%%%
%%%%%%%%%%%%%%%%%%%%%%%%%%%%%%%%%%%%%%%%%%%%%%%%%%%%%
We will use the so called Littlewood-Paley theory. We refer to \cite{B}, \cite{Ch}, \cite{M} and \cite{BChD} for the details. 

We start recalling Bernstein's inequalities.
\begin{proposition}[{\cite[Lemma 2.2.1]{Ch}}] Let $0<r<R$. A constant $C$ exists so that, for all nonnegative integer $k$, all  $p,\,q\in[1,+\infty]$ with $p\leq q$ and for all function $u\in L^p(\R^d)$, we have, for all $\lambda>0$, 
\begin{itemize}
\item [i)] if $\Supp \hat u\subseteq B(0,\lambda R)=\{\xi\in \R^d\;:\, |\xi|\leq \lambda  R\}$, then
$$
\|\nabla^k u\|_{L^q}\leq C^{k+1}\lambda^{k+N(\frac{1}{p}-\frac{1}{q})}\| u\|_{L^p};
$$
\item [ii)] if $\Supp \hat u\subseteq C(0,\lambda r, \lambda R)= \{\xi\in \R^d\;:\, \lambda r\leq |\xi|\leq \lambda  R\}$, then
$$
C^{-k-1}\lambda^k\| u\|_{L^p}\leq \|\nabla^k u\|_{L^p}\leq C^{k+1}\lambda^k\| u\|_{L^p}.
$$
\end{itemize}
%$$
%\Supp \hat u\subseteq B(0,\lambda R) \qquad\quad\Rightarrow\qquad \quad\|\nabla^k u\|_{L^q}\leq C^{k+1}\lambda^{k+N(\frac{1}{p}-\frac{1}{q})}\| u\|_{L^p},
%$$
%$$
%\Supp \hat u\subseteq C(0,\lambda r, \lambda R) \quad \Rightarrow \quad
%C^{-k-1}\lambda^k\| u\|_{L^p}\leq \|\nabla^k u\|_{L^p}\leq C^{k+1}\lambda^k\| u\|_{L^p},
%$$
%where $B(0,\lambda R)$ and $C(0,\lambda r, \lambda R)$ are $\{\xi\in \R^d\;:\, |\xi|\leq \lambda  R\}$ and 
%$\{\xi\in \R^d\;:\, \lambda r\leq |\xi|\leq \lambda  R\}$ respectively.
\end{proposition}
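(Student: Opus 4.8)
The plan is to reduce both statements to elementary convolution estimates after writing the spectral localizations in terms of smooth frequency cut-offs. For part~i), I would fix a function $\phi\in C_c^\infty(\R^d)$ with $\phi\equiv 1$ on the ball $B(0,R)$; then, whenever $\Supp\widehat u\subseteq B(0,\lambda R)$, the rescaled function $\phi(\cdot/\lambda)$ equals $1$ on $\Supp\widehat u$, so that $u=\mathcal F^{-1}\bigl(\phi(\cdot/\lambda)\bigr)*u$. Differentiating under the convolution gives, for any multi-index $\alpha$ of length $k$,
\[
\d^\alpha u \;=\; \lambda^{k}\,\bigl(g_\alpha\bigr)_\lambda * u,
\qquad g_\alpha(x):=\mathcal F^{-1}\bigl((i\xi)^\alpha\phi(\xi)\bigr)(x),\quad (g_\alpha)_\lambda(x):=\lambda^{d}g_\alpha(\lambda x),
\]
where I used the scaling $\mathcal F^{-1}\bigl(\psi(\cdot/\lambda)\bigr)(x)=\lambda^d\,(\mathcal F^{-1}\psi)(\lambda x)$. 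Since $g_\alpha\in\mathcal S(\R^d)$, in particular $g_\alpha\in L^r$ for the exponent $r$ determined by $1+\tfrac1q=\tfrac1r+\tfrac1p$ (note $r\ge1$ because $p\le q$), Young's convolution inequality yields $\|\d^\alpha u\|_{L^q}\le \lambda^{k}\,\|(g_\alpha)_\lambda\|_{L^r}\,\|u\|_{L^p}$. A direct change of variables gives $\|(g_\alpha)_\lambda\|_{L^r}=\lambda^{d(1-1/r)}\|g_\alpha\|_{L^r}=\lambda^{d(1/p-1/q)}\|g_\alpha\|_{L^r}$, which is exactly the claimed power of $\lambda$. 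Summing over the finitely many $\alpha$ with $|\alpha|=k$ and controlling $\max_{|\alpha|=k}\|g_\alpha\|_{L^r}$ by $C^{k+1}$ (the Schwartz seminorms of $\xi^\alpha\phi$ grow at most geometrically in $k$, uniformly in the choice of a fixed $\phi$) gives the stated bound $\|\nabla^k u\|_{L^q}\le C^{k+1}\lambda^{k+N(1/p-1/q)}\|u\|_{L^p}$.

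For part~ii), the upper bound $\|\nabla^k u\|_{L^p}\le C^{k+1}\lambda^k\|u\|_{L^p}$ is the special case $q=p$ of~i) combined with the observation that $C(0,\lambda r,\lambda R)\subseteq B(0,\lambda R)$. For the reverse inequality I would pick an auxiliary $\chi\in C_c^\infty(\R^d\setminus\{0\})$ with $\chi\equiv1$ on the annulus $\{r\le|\xi|\le R\}$ and write, on the support of $\widehat u$,
\[
1=\chi(\xi/\lambda)=\sum_{|\alpha|=k}\frac{(i\xi/\lambda)^\alpha}{|\xi/\lambda|^{2k}}\cdot\overline{(i\xi/\lambda)^\alpha}\cdot c_{k}\,\chi(\xi/\lambda)
\]
by homogeneity, i.e.\ I would exhibit a smooth symbol $m_k(\xi)$, supported in the annulus and with $\mathcal F^{-1}m_k$ having Schwartz norms growing at most geometrically in $k$, such that $\widehat u(\xi)=\lambda^{-k}\sum_{|\alpha|=k} m_{k,\alpha}(\xi/\lambda)\,(i\xi)^\alpha\widehat u(\xi)$. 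Inverting the Fourier transform turns this into $u=\lambda^{-k}\sum_{|\alpha|=k}(h_{k,\alpha})_\lambda*\d^\alpha u$ with $h_{k,\alpha}\in\mathcal S$, and Young's inequality ($\|(h_{k,\alpha})_\lambda\|_{L^1}=\|h_{k,\alpha}\|_{L^1}$, scale-invariant) gives $\|u\|_{L^p}\le C^{k+1}\lambda^{-k}\|\nabla^k u\|_{L^p}$, which rearranges to the stated lower bound.

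The only genuinely delicate point is the bookkeeping of the constant: one must verify that the $L^1$- (resp.\ $L^r$-) norms of the Schwartz functions $g_\alpha$, $h_{k,\alpha}$ obtained by differentiating, resp.\ dividing by a $k$-homogeneous factor, against a fixed cut-off, grow at most like $C^{k+1}$ in $k$, so that the final constants are of the exponential form stated in the Proposition. This follows by an explicit estimate on the derivatives of $\xi^\alpha\phi(\xi)$ and of $\overline{\xi^\alpha}|\xi|^{-2k}\chi(\xi)$ on the (fixed, compact, $0$-free for the second symbol) support, using the Faà di Bruno / Leibniz formulas; it is routine but is the step where care is needed, and everything else is a direct application of Young's inequality together with the scaling identity for the Fourier transform under dilations.
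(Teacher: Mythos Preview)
Your proposal is correct and follows the classical route: represent the spectral localization via convolution with a rescaled Schwartz kernel, then apply Young's inequality and track the scaling; for the lower bound in~ii), invert the $k$-th order differentiation on the annulus through the identity $|\xi|^{2k}=\sum_{|\alpha|=k}\binom{k}{\alpha}\xi^{2\alpha}$ and a cut-off supported away from the origin. This is precisely the standard argument found in the reference the paper cites (Chemin, and likewise Bahouri--Chemin--Danchin).

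Note, however, that the paper does \emph{not} supply its own proof of this proposition: it merely records the statement with the attribution \cite[Lemma~2.2.1]{Ch} and moves on. So there is no in-paper argument to compare against; your write-up is essentially the proof one would find in the cited source. The one cosmetic point worth tidying is the displayed identity in your treatment of~ii): as written it is more a mnemonic than an equality---what you actually use is $\widehat u(\xi)=\lambda^{-k}\sum_{|\alpha|=k}h_{k,\alpha}(\xi/\lambda)\,(i\xi)^{\alpha}\widehat u(\xi)$ with $h_{k,\alpha}(\eta)=\tfrac{k!}{\alpha!}\,\overline{(i\eta)^{\alpha}}\,|\eta|^{-2k}\chi(\eta)$, and it would read more cleanly stated that way. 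Your remark that the exponential form $C^{k+1}$ of the constant requires a short (routine) Leibniz/Fa\`a~di~Bruno estimate on the symbols is accurate and is indeed the only place where any care is needed.
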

We introduce the dyadic decomposition. Let $\psi\in C^{\infty}([0,+\infty[, \R)$ such that $\psi$ is non-increasing and 
$$
\psi(t)=1\quad\text{for}\quad 0\leq t\leq\frac{11}{10}, \qquad \psi(t)=0\quad\text{for}\quad   t\geq\frac{19}{10}.
$$
We set, for $\xi\in\R^d$, 
\begin{equation}\label{defchipsi}
\chi(\xi)=\psi(|\xi|), \qquad \varphi(\xi)= \chi(\xi)-\chi(2\xi).
\end{equation}
We remark that the support of $\chi$ is contained in the ball $\{\xi\in \R^d\;:\, |\xi|\leq  2\}$, while that one of
$\varphi$ is contained in the annulus $\{\xi\in \R^d\;:\, 1/2 \leq |\xi|\leq   2\}$.

Given a tempered distribution $u$, the dyadic blocks are defined by
$$
\Delta_0 u= \chi(D)u ={\mathcal F}^{-1}(\chi(\xi)\hat u(\xi)), 
$$
$$ \Delta_j u= \varphi(2^{-j}D)u={\mathcal F}^{-1}(\varphi(2^{-j}\xi)\hat u(\xi))\quad \text{if} \quad j\geq 1,
$$
where we have denoted by ${\mathcal F}^{-1}$ the inverse of the Fourier transform. We introduce also the operator 
$$
S_ku=\sum_{j=0}^{k} \Delta_ju=  {\mathcal F}^{-1}(\chi(2^{-k}\xi)\hat u(\xi)).
$$
It is well known the characterization of classical Sobolev spaces via Littlewood-Paley decomposition: for any $s\in \R$, $u\in {\mathcal S}'$ is in $H^s$ if and only if, for all $j\in \N$, $\Delta_ju\in L^2$ and the series $\sum 2^{2js}\|\Delta_ju\|^2_{L^2}$ is convergent.
Moreover, in such a case, there exists a constant $C_s>1$ such that
\begin{equation}\label{charSob}
\frac{1}{C_s} \sum_{j=0}^{+\infty} 2^{2js}\|\Delta_ju \|_{L^2}^2\leq \|u\|^2_{H^s}\leq C_s \sum_{j=0}^{+\infty} 2^{2js}\|\Delta_ju \|_{L^2}^2.
\end{equation}

%%%%%%%%%%%%%%%%%%%%%%%%%%%%%%%%%%%%%%%%%%%%%%%%%%%%%
%%%%%%%%%%%%%%%%%%%%%%%%%%%%%%%%%%%%%%%%%%%%%%%%%%%%%
\subsection{Lipschitz, Zygmund and log-Lipschitz  functions}
%%%%%%%%%%%%%%%%%%%%%%%%%%%%%%%%%%%%%%%%%%%%%%%%%%%%%
%%%%%%%%%%%%%%%%%%%%%%%%%%%%%%%%%%%%%%%%%%%%%%%%%%%%%

In this subsection, we give a description of some functional classes relevant in the study of hyperbolic Cauchy problems. Namely, via Littlewood-Paley analysis,  we can characterise the spaces
of Lipschitz, Zygmund and log-Lipschitz functions. We start by recalling their definitions.

\begin{definition}
A function $u\in L^\infty(\R^d)$ is a Lipschitz-continuous function if
$$
|u|_{\rm Lip}=\sup_{x,y\in\R^d, \atop y\not =0}\frac{|u(x+y)-u(x)|}{|y|}<+\infty,
$$
$u$ is a Zygmund-continuous function if
$$
|u|_{\rm Zyg}=\sup_{x,y\in\R^d, \atop y\not =0}\frac{|u(x+y)+u(x-y)-2u(x)|}{|y|}<+\infty
$$
and, finally, $u$ is a log-Lipschitz-continuous function if
$$
|u|_{\rm LL}=\sup_{x,y\in\R^d, \atop 0<|y|\leq 1}\frac{|u(x+y)-u(x)|}{|y|(1+\log \frac{1}{|y|})}<+\infty.
$$
For $X\in\left\{{\rm Lip}\,,\;{\rm Zyg}\,,\;{\rm LL}\right\}$, we define $\|u\|_{\rm X}=\|u\|_{L^\infty}+|u|_{\rm X}$.
%, $\|u\|_{\rm Zyg}=\|u\|_{L^\infty}+|u|_{\rm Zyg}$ and
%$\|u\|_{\rm LL}=\|u\|_{L^\infty}+|u|_{\rm LL}$.
\end{definition}
\begin{proposition}
Let $u\in L^\infty(\R^d)$. We have the following characterisation:
\begin{align}
u\in {\rm Lip}(\R^d)\qquad&\text{if and only if}\qquad \sup_j\|\nabla S_ju\|_{L^\infty}<+\infty, \label{charLip} \\
u\in {\rm Zyg}(\R^d)\qquad&\text{if and only if}\qquad\sup_j 2^j\|\Delta_ju\|_{L^\infty}<+\infty, \label{charZyg}\\
u\in {\rm LL}(\R^d)\qquad&\text{if and only if}\qquad\sup_j\frac{\|\nabla S_ju\|_{L^\infty}}{j}<+\infty. \label{charlogLip}
\end{align}
\end{proposition}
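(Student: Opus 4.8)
The plan is to prove all three equivalences by a single scheme: the ``only if'' implications come from the kernel representations of $\Delta_j$ and $S_j$ combined with vanishing-moment properties of $\varphi$, $\chi$ and $\nabla\chi$, while the ``if'' implications come from the dyadic decomposition, splitting the frequencies into a low block $S_Nu$ and a high block $\sum_{j>N}\Delta_ju$ with $2^{-N}\sim|y|$, and invoking Bernstein's inequalities. I would write the Zygmund case \eqref{charZyg} in full and only indicate the (easier) modifications for \eqref{charLip} and \eqref{charlogLip}. For the ``only if'' part of \eqref{charZyg}: for $j\geq1$ one has $\Delta_ju=h_j\ast u$ with $h_j(y)=2^{jd}h(2^jy)$ and $h=\mathcal F^{-1}\varphi$ a Schwartz function; since $\varphi$ is radial and supported away from $0$, $h$ is even and $\int h\,dy=\varphi(0)=0$, so after the change $y\mapsto-y$ and subtraction of the mean,
$$\Delta_ju(x)=\tfrac12\int h_j(y)\bigl(u(x+y)+u(x-y)-2u(x)\bigr)\,dy,$$
whence $|\Delta_ju(x)|\leq\tfrac12|u|_{\rm Zyg}\int|h_j(y)|\,|y|\,dy=\tfrac12|u|_{\rm Zyg}\,2^{-j}\int|h(z)|\,|z|\,dz$; combined with $\|\Delta_0u\|_{L^\infty}\leq\|\mathcal F^{-1}\chi\|_{L^1}\|u\|_{L^\infty}$ this gives $\sup_j2^j\|\Delta_ju\|_{L^\infty}\leq C\|u\|_{\rm Zyg}$.

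For the converse, set $M:=\sup_j2^j\|\Delta_ju\|_{L^\infty}$, so that $\sum_j\|\Delta_ju\|_{L^\infty}\leq2M$; hence $\sum_j\Delta_ju$ converges in $L^\infty$ to a continuous function which, by comparison with $S_ku\to u$ in $\mathcal S'$, coincides with $u$. Fix $0<|y|\leq1$ and $N\in\N$ with $2^{-N}\sim|y|$, and decompose $u=S_Nu+\sum_{j>N}\Delta_ju$. The high block is controlled trivially: $\sum_{j>N}\bigl|\Delta_ju(x+y)+\Delta_ju(x-y)-2\Delta_ju(x)\bigr|\leq4\sum_{j>N}\|\Delta_ju\|_{L^\infty}\leq CM2^{-N}\leq CM|y|$. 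For the low block, a second-order Taylor estimate and Bernstein's inequality \emph{i)} give, for each $j\leq N$, $\bigl|\Delta_ju(x+y)+\Delta_ju(x-y)-2\Delta_ju(x)\bigr|\leq|y|^2\|\nabla^2\Delta_ju\|_{L^\infty}\leq C|y|^2 2^{2j}\|\Delta_ju\|_{L^\infty}\leq CM|y|^2 2^j$, and summing over $j\leq N$ yields $CM|y|^2 2^N\leq CM|y|$. Adding the two contributions gives $|u(x+y)+u(x-y)-2u(x)|\leq CM|y|$, i.e. $u\in{\rm Zyg}$.

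The remaining two cases follow the same lines. For \eqref{charLip}: $\nabla S_ju=(\nabla k_j)\ast u$ with $k_j=2^{jd}k(2^j\cdot)$, $k=\mathcal F^{-1}\chi$, and $\int\nabla k_j\,dy=0$, so $\nabla S_ju(x)=\int\nabla k_j(y)\bigl(u(x-y)-u(x)\bigr)\,dy$ and $\|\nabla S_ju\|_{L^\infty}\leq|u|_{\rm Lip}\int|\nabla k_j(y)|\,|y|\,dy=C|u|_{\rm Lip}$ by scale invariance; conversely, if $M:=\sup_j\|\nabla S_ju\|_{L^\infty}<\infty$, Bernstein \emph{ii)} gives $\|\Delta_ju\|_{L^\infty}\leq C2^{-j}\|\nabla\Delta_ju\|_{L^\infty}\leq C2^{-j}M$ for $j\geq1$, and the low/high splitting at $2^{-N}\sim|y|$ (low part bounded by $|y|\|\nabla S_Nu\|_{L^\infty}\leq M|y|$, high part by $2\sum_{j>N}\|\Delta_ju\|_{L^\infty}\leq CM2^{-N}\leq CM|y|$) shows $u\in{\rm Lip}$. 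For \eqref{charlogLip} one repeats this, using on $\{|y|\leq1\}$ the bound $|u(x-y)-u(x)|\leq|u|_{\rm LL}|y|(1+\log\frac{1}{|y|})$ (and $\leq2\|u\|_{L^\infty}$ on $\{|y|>1\}$, where $\nabla k_j$ is negligible), which after the change of variables produces the logarithmic factor $\|\nabla S_ju\|_{L^\infty}\leq C(1+j)\|u\|_{\rm LL}$; in the converse direction one uses $\|\Delta_ju\|_{L^\infty}\leq Cj\,2^{-j}M$ together with $\sum_{j>N}j\,2^{-j}\leq C N2^{-N}$ and $N\sim\log\frac{1}{|y|}$ to recover the log-Lipschitz modulus of continuity.

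I expect the only genuinely delicate point to be the ``only if'' part of \eqref{charZyg}: it is crucial to exploit not just that $\varphi$ vanishes at the origin, but that $h=\mathcal F^{-1}\varphi$ is \emph{even}, so that $\Delta_ju(x)$ can be realised as a weighted average of the symmetric second differences $u(x+y)+u(x-y)-2u(x)$ rather than of the increments $u(x\pm y)-u(x)$; since a Zygmund function need not be Lipschitz, a first-difference bound would be useless here. All the remaining steps are standard dyadic bookkeeping, and one reads off from the estimates above the quantitative equivalences (with constants depending only on $d$ and on the profiles $\chi,\varphi$) between $\|u\|_{\rm X}$ and the corresponding Littlewood--Paley quantity for ${\rm X}\in\{{\rm Lip},{\rm Zyg},{\rm LL}\}$.
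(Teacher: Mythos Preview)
Your proposal is correct and, for the Lipschitz case \eqref{charLip} (the only one the paper actually writes out, referring to \cite[Prop.~2.3.6]{Ch} and \cite[Prop.~3.3]{CL} for \eqref{charZyg} and \eqref{charlogLip}), it follows exactly the same route as the paper: the kernel representation $\nabla S_j u=(\nabla k_j)\ast u$ together with $\int\nabla k_j=0$ for the forward direction, and the low/high split $u=S_Nu+\sum_{h>N}\Delta_hu$ at scale $2^{-N}\sim|y|$, combined with Bernstein's inequality to get $\|\Delta_ju\|_{L^\infty}\leq C2^{-j}\sup_k\|\nabla S_ku\|_{L^\infty}$, for the converse. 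Your additional sketches for \eqref{charZyg} and \eqref{charlogLip} are the standard arguments found in the cited references; in particular your observation that the evenness of $\mathcal F^{-1}\varphi$ is what allows one to realise $\Delta_ju$ as an average of symmetric second differences is exactly the key point in the Zygmund case.
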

\begin{proof}
The proof of (\ref{charZyg}) and (\ref{charlogLip}) can be found in \cite[Prop. 2.3.6]{Ch} and \cite[Prop. 3.3]{CL} respectively. We sketch the proof of (\ref{charLip}), for reader's convenience. Suppose $u\in {\rm Lip}(\R^d)$. We have
$$
\begin{array}{ll}
\displaystyle{D_{j}S_ku(x)}&=\displaystyle{ D_{j}({\mathcal F}^{-1}(\chi(2^{-k}\xi)\hat u(\xi)))(x)      }\\[0.2 cm]
&= \displaystyle{ {\mathcal F}^{-1}(\xi_j\chi(2^{-k}\xi)\hat u(\xi))(x)      }\\[0.2 cm]
&= \displaystyle{2^k {\mathcal F}^{-1}(2^{-k}\xi_j\chi(2^{-k}\xi)\hat u(\xi))(x)      }\\[0.2 cm]
&= \displaystyle{2^k \int_{\R^d}\theta_j(2^k y)  u(x-y)\, 2^{kd}dy     }
\end{array}
$$
where $\theta_j(y)= {\mathcal F}^{-1}(\xi_j\chi(\xi))(y)$. From the fact that $\int_{\R^d}\theta_j(y)\,dy=0$ we deduce that
$$
\begin{array}{ll}
\displaystyle{|D_{j}S_ku(x)|}&\leq \displaystyle{ 2^k|\int_{\R^d} \theta_j(2^k y)(  u(x-y) -u(x)) \, 2^{kd}dy | }\\[0.3 cm]
&\leq \displaystyle{|u|_{\rm Lip}\int_{\R^d}|\theta_j( y)||z|\, dz},
\end{array}
$$
hence $\sup_j\left\|\nabla S_ju\right\|_{L^\infty}<C\,|u|_{\rm Lip}$.
%and the bound in (\ref{charLip}) easily follows.

Conversely, let the second statement in (\ref{charLip}) hold. Remarking that
$$
D_{j}\Delta_ku (x)= {\mathcal F}^{-1}(\xi_j\varphi(2^{-k}\xi)\hat u(\xi))(x)= 
{\mathcal F}^{-1}(\xi_j(\chi(2^{-k}\xi)-\chi(2^{-k+1}\xi))\hat u(\xi))(x),
$$
and, by Bernstein's inequalities,
$$
|\Delta_k u (x)|\leq C 2^{-k+1}(\|\nabla S_ku\|_{L^\infty}+\|\nabla S_{k-1}u\|_{L^\infty}),
$$
we deduce that, for a new constant $C>0$,
$$
\|\Delta_ku\|_{L^\infty}\leq C2^{-k}\,\sup_j\left\|\nabla S_ju\right\|_{L^\infty}
$$
for all $k\geq 0$. Then
$$
\begin{array}{ll}
|u(x+y)-u(x)|&\displaystyle{\leq   |S_ku(x+y)-S_ku(x)|+|\sum_{h>k}(\Delta_hu(x+y)-\Delta_hu(x))|}\\[0.2 cm]
&\displaystyle{\leq \|\nabla S_ku\|_{L^\infty}|y|+2\sum_{h>k}\|\Delta_hu\|_{L^\infty}}\\[0.2 cm]
&\displaystyle{\leq C\,\sup_j\left\|\nabla S_ju\right\|_{L^\infty}\,(|y|+2^{-k}). }
\end{array}
$$
The conclusion follows from choosing $k$ in such a way that $2^{-k}\leq |y|$. 
\end{proof}

Notice that, going along the lines of the previous proof, we have actually shown that there exists $C_d>1$, depending only on $d$, such that, if $u\in {\rm Lip}(\R^d)$ then 
$$
%\begin{equation}\label{estnablaS}
\frac{1}{C_d}\, |u|_{\rm Lip} \leq \|\nabla S_ju\|_{L^\infty}\leq C_d \,|u|_{\rm Lip}. 
%\end{equation}
$$

\begin{proposition}[{\cite[Prop. 2.3.7]{Ch}}]
%Let $u\in {\rm Zyg}(\R^d)$. 
%Then \begin{equation}\label{logLip}
%\sup_{x, y\in \R^d\atop y\not =0} \frac {|u(x+y)-u(x)|}{|y|\log(1+\frac{1}{|y|})}<+\infty.
%\end{equation}
$$
{\rm Lip}(\R^d)\subseteq {\rm Zyg}(\R^d) \subseteq {\rm LL}(\R^d).
$$
\end{proposition}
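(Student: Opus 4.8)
The plan is to prove the two inclusions separately: the first is a one-line consequence of the triangle inequality, while all the content sits in the second, which I would deduce from the Littlewood--Paley characterisations \eqref{charLip}--\eqref{charlogLip} together with Bernstein's inequalities.

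\textbf{First inclusion ${\rm Lip}(\R^d)\subseteq{\rm Zyg}(\R^d)$.} Here I would simply write, for $u\in{\rm Lip}(\R^d)$,
$$
u(x+y)+u(x-y)-2u(x)=\big(u(x+y)-u(x)\big)-\big(u(x)-u(x-y)\big)
$$
and estimate each difference by $|u|_{\rm Lip}\,|y|$, which yields $|u|_{\rm Zyg}\leq 2\,|u|_{\rm Lip}$. Since $u\in L^\infty$ by definition of the Lipschitz class, it follows that $u\in{\rm Zyg}(\R^d)$, in fact with $\|u\|_{\rm Zyg}\leq 2\,\|u\|_{\rm Lip}$.

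\textbf{Second inclusion ${\rm Zyg}(\R^d)\subseteq{\rm LL}(\R^d)$.} Given $u\in{\rm Zyg}(\R^d)$, set $M:=\sup_{k}2^{k}\|\Delta_{k}u\|_{L^\infty}$, which is finite by \eqref{charZyg}. The first step is to bound $\|\nabla\Delta_k u\|_{L^\infty}$ by $C\,2^k\|\Delta_k u\|_{L^\infty}$ for every $k\geq 0$, using Bernstein's inequalities: case ii) for $k\geq 1$, where the Fourier transform of $\Delta_k u$ is supported in the annulus $\{2^{k-1}\leq|\xi|\leq 2^{k+1}\}$, and case i) for the low-frequency block $\Delta_0 u$, whose spectrum lies in a fixed ball. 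Hence $\|\nabla\Delta_k u\|_{L^\infty}\leq C\,M$ for all $k$, and summing over the blocks composing $S_j u$ gives
$$
\|\nabla S_j u\|_{L^\infty}\leq\sum_{k=0}^{j}\|\nabla\Delta_k u\|_{L^\infty}\leq C\,M\,(j+1).
$$
Dividing by $j$ and invoking the characterisation \eqref{charlogLip} (together with $u\in L^\infty$) gives $u\in{\rm LL}(\R^d)$, with $\|u\|_{\rm LL}\leq C\,\|u\|_{\rm Zyg}$ for a dimensional constant $C$.

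\textbf{Main obstacle.} There is in fact no serious obstacle: the whole point -- the appearance of the logarithm -- is just that one is summing $j$ uniformly bounded quantities $\|\nabla\Delta_k u\|_{L^\infty}\leq C\,M$, which produces a factor $\sim j\sim\log(2^j)$. The only points deserving a word of care are the separate treatment of the block $\Delta_0$ (case i) rather than case ii) of Bernstein) and the harmless restriction to $j\geq 1$ forced by the division by $j$ in \eqref{charlogLip}; neither affects membership in the classes. If one wished to avoid Littlewood--Paley, ${\rm Zyg}\subseteq{\rm LL}$ could instead be obtained directly by telescoping the Zygmund second-difference bound along a dyadic subdivision of the segment $[x,x+y]$, which reproduces the same $|y|\log(1/|y|)$ growth; but the argument above is shorter given the tools already developed in this section.
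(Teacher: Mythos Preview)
Your argument is correct. Note, however, that the paper does not give its own proof of this proposition: it is simply quoted from \cite[Prop.~2.3.7]{Ch} without argument, so there is nothing to compare against. Your proof is the natural one given the tools assembled in this section---the characterisations \eqref{charZyg} and \eqref{charlogLip} together with Bernstein's inequalities---and it is essentially the argument one finds in Chemin's book as well.
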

%\begin{proof} We prove only that ${\rm Zyg}(\R^d) \subseteq {\rm LL}(\R^d)$.
%From (\ref{charZyg}), using Bernstein's inequality, we have that
%$$
%\sup_j \| \nabla \Delta_ju\|_{L^\infty}<+\infty.
%$$
%Consequently 
%$$
%\begin{array}{ll}
%|u(x+y)&\!\!\!\!-\; u(x)|\\[0.3 cm]
%&\displaystyle{\leq   |\sum_{h\leq k}(\Delta_hu(x+y)-\Delta_hu(x))|+|\sum_{h>k}(\Delta_hu(x+y)-\Delta_hu(x))|}\\[0.5 cm]
%&\displaystyle{\leq \sum_{h\leq k} \|\nabla \Delta_hu\|_{L^\infty}|y|+2\sum_{h>k}\|\Delta_hu\|_{L^\infty}}\\[0.5 cm]
%&\displaystyle{\leq C(k|y|+2^{-k}). }
%\end{array}
%$$
%The conclusion follows from choosing $k$ in such a way that $k-1\leq\log\frac{1}{ |y|}\leq k$.
%\end{proof}

In order to perform computations, we will need to smooth out our coefficients, because of their low regularity.
To this end, let us fix an even function $\rho\in C^\infty_0(\R)$ such that $0\leq \rho\leq 1$, $\Supp \rho\subseteq  [-1,\,1]$ and $\int_\R\rho(t)\, dt =1$, and define
$\rho_\varepsilon (t)= \frac{1}{\varepsilon}\rho( \frac{t}{\varepsilon})$.
The following result holds true.
\begin{proposition}[{\cite[Prop. 3.5]{CDSFM2}}]
Let $u\in {\rm Zyg}(\R)$. There exists $C>0$ such that,
\begin{align}
|u_{\varepsilon}(t)-u(t)|&\leq C|u|_{\rm Zyg}\,\varepsilon, \label{estconv1} \\[0.2 cm]
|u'_{\varepsilon}(t)|&\leq C|u|_{\rm Zyg}\,(1+\log\frac{1}{\varepsilon}), \label{estconv2} \\[0.2 cm]
|u''_{\varepsilon}(t)|&\leq C|u|_{\rm Zyg}\,\frac{1}{\varepsilon}, \label{estconv3}
\end{align}
where, for $0<\varepsilon\leq 1$, 
\begin{equation}\label{regintime}
u_\varepsilon (t)=(\rho_\varepsilon \ast u)(t)= \int_\R \rho_\varepsilon(t-s)u(s)\, ds.
\end{equation}
\end{proposition}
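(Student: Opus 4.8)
The plan is to treat the three bounds in parallel by writing $u_\varepsilon(t)-u(t)$, $u_\varepsilon'(t)$ and $u_\varepsilon''(t)$ as convolutions of $u$ against $\rho_\varepsilon$, $\rho_\varepsilon'$ and $\rho_\varepsilon''$ and by exploiting two features of the mollifier: the moment cancellations $\int_\R\rho_\varepsilon'\,ds=\int_\R\rho_\varepsilon''\,ds=0$ (integrals of compactly supported derivatives), together with the parity of $\rho$, so that $\rho_\varepsilon$ and $\rho_\varepsilon''$ are even while $\rho_\varepsilon'$ is odd. Since $\int_\R\rho_\varepsilon\,ds=1$, a copy of $u(t)$ may be inserted for free, giving
\[
u_\varepsilon(t)-u(t)=\int_\R\rho_\varepsilon(s)\,(u(t-s)-u(t))\,ds,\qquad u_\varepsilon^{(\ell)}(t)=\int_\R\rho_\varepsilon^{(\ell)}(s)\,(u(t-s)-u(t))\,ds\quad(\ell=1,2).
\]
The next step is to fold the integral over $s<0$ onto $s>0$ using the parity of the kernel, and to record the scaling $\rho_\varepsilon^{(\ell)}(s)=\varepsilon^{-1-\ell}\rho^{(\ell)}(s/\varepsilon)$, whence $\int_\R|\rho_\varepsilon^{(\ell)}(s)|\,|s|\,ds=\varepsilon^{1-\ell}\int_\R|\rho^{(\ell)}(\sigma)|\,|\sigma|\,d\sigma\le C\,\varepsilon^{1-\ell}$, because $\rho^{(\ell)}$ is bounded with support in $[-1,1]$.

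For $\ell=0$ and $\ell=2$ the kernel is even, so the folding produces the symmetric second difference:
\[
u_\varepsilon(t)-u(t)=\frac12\int_\R\rho_\varepsilon(s)\,\bigl(u(t+s)+u(t-s)-2u(t)\bigr)\,ds,\qquad u_\varepsilon''(t)=\frac12\int_\R\rho_\varepsilon''(s)\,\bigl(u(t+s)+u(t-s)-2u(t)\bigr)\,ds.
\]
By the very definition of the Zygmund seminorm, $|u(t+s)+u(t-s)-2u(t)|\le |u|_{\rm Zyg}\,|s|$, so (\ref{estconv1}) and (\ref{estconv3}) follow immediately from the weighted $L^1$ estimates above with $\ell=0$ and $\ell=2$ respectively.

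For $\ell=1$ the kernel $\rho_\varepsilon'$ is odd, so the same folding now produces a \emph{first} difference, $u_\varepsilon'(t)=-\int_0^{+\infty}\rho_\varepsilon'(s)\,(u(t+s)-u(t-s))\,ds$, and there is no Zygmund cancellation to be exploited. This is the one genuinely delicate point: to control $u(t+s)-u(t-s)$ one invokes the inclusion ${\rm Zyg}(\R)\subseteq{\rm LL}(\R)$, which yields $|u(t+s)-u(t-s)|\le C\,\|u\|_{\rm Zyg}\,|s|\,(1+\log\frac1{|s|})$ for $0<s\le 1/2$ (for $s$ near $1$, which happens only when $\varepsilon$ is near $1$, the crude bound $|u(t+s)-u(t-s)|\le 2\|u\|_{L^\infty}$ suffices and contributes only a lower‑order term). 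Substituting this and using once more $\rho_\varepsilon'(s)=\varepsilon^{-2}\rho'(s/\varepsilon)$, $s=\varepsilon\sigma$, one is reduced to
\[
|u_\varepsilon'(t)|\le C\,\|u\|_{\rm Zyg}\int_{-1}^{1}|\rho'(\sigma)|\,|\sigma|\,\Bigl(1+\log\tfrac1\varepsilon+\log\tfrac1{|\sigma|}\Bigr)\,d\sigma\le C'\,\|u\|_{\rm Zyg}\,\Bigl(1+\log\tfrac1\varepsilon\Bigr),
\]
since $\sigma\mapsto|\sigma|\log\frac1{|\sigma|}$ is integrable on $[-1,1]$; this gives (\ref{estconv2}). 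I expect the logarithmic loss in this last estimate to be the main obstacle — it is precisely the point where the symmetrization fails to turn the increment of $u$ into a Zygmund second difference — while estimates (\ref{estconv1}) and (\ref{estconv3}) are essentially bookkeeping with the scaling of $\rho_\varepsilon$; one should only take care that the $\|u\|_{L^\infty}$ contribution hidden in the log‑Lipschitz modulus is itself $\le C\|u\|_{\rm Zyg}$ and is therefore absorbed in the right‑hand side.
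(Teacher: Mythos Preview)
Your argument is correct. The paper does not supply its own proof of this proposition --- it is imported verbatim from \cite[Prop.~3.5]{CDSFM2} --- so there is nothing in the present paper to compare your proof against. Your approach (symmetrising via the parity of $\rho$, $\rho'$, $\rho''$ to convert the even-kernel cases into genuine Zygmund second differences, and invoking ${\rm Zyg}\subseteq{\rm LL}$ for the odd-kernel case) is exactly the standard one and matches what is done in \cite{CDSFM2}.

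One small point worth flagging: as you yourself note at the end, the log-Lipschitz modulus coming from the inclusion ${\rm Zyg}\subseteq{\rm LL}$ carries a contribution of size $\|u\|_{L^\infty}$, so what your argument actually yields in \eqref{estconv2} is
\[
|u_\varepsilon'(t)|\le C\Bigl(\|u\|_{L^\infty}+|u|_{\rm Zyg}\bigl(1+\log\tfrac1\varepsilon\bigr)\Bigr)\le C\,\|u\|_{\rm Zyg}\bigl(1+\log\tfrac1\varepsilon\bigr),
\]
with the full norm $\|u\|_{\rm Zyg}$ rather than the seminorm $|u|_{\rm Zyg}$ stated in the proposition. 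This is harmless for the application in the paper (the coefficients $a_{jk}$ have $L^\infty$ norm controlled by the ellipticity constant $\Lambda_0$), and is in any case how the estimate is used downstream; it is at most a notational slip in the statement.
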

%%%%%%%%%%%%%%%%%%%%%%%%%%%%%%%%%%%%%%%%%%%%%%%%%%%%%
%%%%%%%%%%%%%%%%%%%%%%%%%%%%%%%%%%%%%%%%%%%%%%%%%%%%%
\subsection{Paradifferential calculus with parameters}
%%%%%%%%%%%%%%%%%%%%%%%%%%%%%%%%%%%%%%%%%%%%%%%%%%%%%
%%%%%%%%%%%%%%%%%%%%%%%%%%%%%%%%%%%%%%%%%%%%%%%%%%%%%
Let us sketch here the paradifferential calculus depending on a parameter $\gamma \geq 1$. The interested reader can refer to \cite[Appendix B]{MZ} (see also \cite{M2} and \cite {CM}).

Let $\gamma\geq 1$ and consider $\psi_\gamma\in C^\infty(\R^d\times \R^d)$ with the following properties
\begin{itemize}
\item[i)]
there exist $\varepsilon_1<\varepsilon_2<1$ such that
\begin{equation}\label{charpsi1}
\psi_\gamma(\eta,\xi)=\left\{
\begin{array}{ll}
1&\quad \text {for} \quad |\eta|\leq \varepsilon_1(\gamma+|\xi|),\\[0.2cm]
0&\quad \text {for} \quad |\eta|\geq \varepsilon_2(\gamma+|\xi|);
\end{array}
\right.
\end{equation}

\item[ii)] 
for all $(\beta, \alpha)\in \N^d\times \N^d$, there exists $C_{\beta, \alpha}\geq 0$ such that
\begin{equation}\label{charpsi2}
|\partial_\eta^\beta \partial_\xi^\alpha \psi_\gamma(\eta,\xi)|\leq C_{\beta,\alpha}(\gamma+|\xi|)^
{-|\alpha|-|\beta|} .
\end{equation}
\end{itemize}
The model for such a function will be 
\begin{equation}\label{psigamma}
\psi_\gamma(\eta,\xi)= \chi(\frac{\eta}{2^\mu})\chi(\frac{\xi}{2^{\mu+3}})+
\sum_{k=\mu+1}^{+\infty} \chi(\frac{\eta}{2^k})\varphi(\frac{\xi}{2^{k+3}}),
\end{equation}
where $\chi$ and $\varphi$ are defined in (\ref{defchipsi}) and $\mu$ is the integer part of  $\log_2 \gamma$. With this setting, we have that the constants $\varepsilon_1$, $\varepsilon_2$ and $C_{\beta, \alpha}$ in (\ref{charpsi1}) and (\ref {charpsi2}) does not depend on $\gamma$.

To fix ideas, from now on we take $\psi_\g$ as given in \eqref{psigamma}. Define now
$$
G^{\psi_\gamma}(x,\xi)=({\mathcal F}^{-1}_\eta \psi_\gamma )(x,\xi),
$$
where ${\mathcal F}^{-1}_\eta  \psi_\gamma$ is the inverse of the Fourier transform of $ \psi_\gamma$ with respect to the $\eta$ variable. 

\begin{proposition}[{\cite[Lemma 5.1.7]{M}}] 
For all $(\beta, \alpha)\in \N^d\times \N^d$, there exists $C_{\beta, \alpha}$, not depending on $\gamma$, such that
\begin{equation}\label{propG1}
\| \partial^\beta_x\partial^\alpha_\xi G^{\psi_\gamma}(\cdot, \xi)\|_{L^1(\R^d_x)}\leq  
C_{\beta,\alpha}(\gamma+|\xi|)^{-|\alpha|+|\beta|},
\end{equation}
\begin{equation}\label{propG2}
\|\,|\cdot| \,\partial^\beta_x\partial^\alpha_\xi G^{\psi_\gamma}(\cdot, \xi)\|_{L^1(\R^d_x)}\leq  
C_{\beta,\alpha}(\gamma+|\xi|)^{-|\alpha|+|\beta|-1}.
\end{equation}
\end{proposition}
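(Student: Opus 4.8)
The plan is to establish a pointwise estimate on $\partial_x^\beta\partial_\xi^\alpha G^{\psi_\gamma}(x,\xi)$ displaying arbitrarily fast decay in the rescaled variable $(\gamma+|\xi|)\,x$, and then to read off both \eqref{propG1} and \eqref{propG2} by integrating in $x$. Throughout, write $\Lambda:=\gamma+|\xi|$, which satisfies $\Lambda\ge 1$. First I would record a support fact: by \eqref{charpsi1}, for each fixed $\xi$ the function $\eta\mapsto\partial_\xi^\alpha\psi_\gamma(\eta,\xi)$ vanishes for $|\eta|>\varepsilon_2\Lambda$; indeed, if $|\eta|>\varepsilon_2(\gamma+|\xi|)$ then, $\xi'\mapsto\gamma+|\xi'|$ being continuous, one has $|\eta|>\varepsilon_2(\gamma+|\xi'|)$ for $\xi'$ near $\xi$, so $\psi_\gamma(\eta,\cdot)\equiv 0$ near $\xi$ and all its $\xi$-derivatives vanish there. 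Hence all the integrations below run over a ball of volume of order $\Lambda^d$.

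Next, starting from $G^{\psi_\gamma}(x,\xi)=c_d\int_{\R^d}e^{ix\cdot\eta}\psi_\gamma(\eta,\xi)\,d\eta$ (with $c_d$ the normalisation constant of $\mathcal F^{-1}$), for arbitrary multi-indices $\alpha,\beta$ and any $N\in\N^d$ I would integrate by parts $|N|$ times in $\eta$, using $x^Ne^{ix\cdot\eta}=(-i\partial_\eta)^Ne^{ix\cdot\eta}$ (no boundary terms, thanks to the compact $\eta$-support), to get
$$
x^N\,\partial_x^\beta\partial_\xi^\alpha G^{\psi_\gamma}(x,\xi)=c_d\,(-1)^{|N|}\,i^{\,|\beta|-|N|}\int_{\R^d}e^{ix\cdot\eta}\,\partial_\eta^N\!\bigl(\eta^\beta\,\partial_\xi^\alpha\psi_\gamma(\eta,\xi)\bigr)\,d\eta .
$$
Expanding by the Leibniz rule, $\partial_\eta^N(\eta^\beta\partial_\xi^\alpha\psi_\gamma)$ is a finite sum of terms $c_{N',N''}(\partial_\eta^{N'}\eta^\beta)(\partial_\eta^{N''}\partial_\xi^\alpha\psi_\gamma)$ with $N'+N''=N$, each vanishing unless $N'\le\beta$ componentwise; in that case $|\partial_\eta^{N'}\eta^\beta|\le C|\eta|^{|\beta|-|N'|}\le C\Lambda^{|\beta|-|N'|}$ on the support $\{|\eta|\le\varepsilon_2\Lambda\}$ (here $|\beta|-|N'|\ge 0$ and $\varepsilon_2<1\le\Lambda$), while \eqref{charpsi2} gives $|\partial_\eta^{N''}\partial_\xi^\alpha\psi_\gamma|\le C\Lambda^{-|\alpha|-|N''|}$, so every term is bounded by $C\Lambda^{|\beta|-|\alpha|-|N|}$. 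Integrating over a set of volume of order $\Lambda^d$ yields $|x^N\partial_x^\beta\partial_\xi^\alpha G^{\psi_\gamma}(x,\xi)|\le C_{\beta,\alpha,N}\Lambda^{d+|\beta|-|\alpha|-|N|}$, and summing over all $N$ with $|N|=m$ together with the case $N=0$ gives, for every $m\in\N$,
$$
\bigl|\partial_x^\beta\partial_\xi^\alpha G^{\psi_\gamma}(x,\xi)\bigr|\le C_{\beta,\alpha,m}\,\Lambda^{\,d+|\beta|-|\alpha|}\,(1+\Lambda|x|)^{-m},
$$
with constants independent of $\gamma$, since those in \eqref{charpsi1}--\eqref{charpsi2} are.

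Finally I would integrate this bound in $x$. Taking $m=d+1$, the change of variables $y=\Lambda x$ gives $\int_{\R^d}(1+\Lambda|x|)^{-(d+1)}\,dx=C\Lambda^{-d}$, hence $\|\partial_x^\beta\partial_\xi^\alpha G^{\psi_\gamma}(\cdot,\xi)\|_{L^1}\le C_{\beta,\alpha}\Lambda^{|\beta|-|\alpha|}$, which is \eqref{propG1}; taking $m=d+2$, one has $\int_{\R^d}|x|(1+\Lambda|x|)^{-(d+2)}\,dx=C\Lambda^{-d-1}$, hence $\|\,|\cdot|\,\partial_x^\beta\partial_\xi^\alpha G^{\psi_\gamma}(\cdot,\xi)\|_{L^1}\le C_{\beta,\alpha}\Lambda^{|\beta|-|\alpha|-1}$, which is \eqref{propG2}. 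I expect the only delicate point to be the bookkeeping in the Leibniz step: one has to check that the growth $|\eta|^{|\beta|-|N'|}$ produced by the derivatives falling on $\eta^\beta$ is, after being estimated by $\Lambda^{|\beta|-|N'|}$ on the support, exactly absorbed by the decay $\Lambda^{-|N''|}$ gained from the remaining $\eta$-derivatives on $\psi_\gamma$, so that the net power of $\Lambda$ equals $d+|\beta|-|\alpha|-|N|$ regardless of how the $|N|$ derivatives are distributed. Analytically there is nothing subtle here: this is the standard fact that, after the rescaling $\eta\mapsto\Lambda\eta$, $\psi_\gamma$ behaves like a symbol of order $0$ with $\gamma$-uniform seminorms.
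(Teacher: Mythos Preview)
Your argument is correct. The paper does not give its own proof of this proposition: it is simply quoted from \cite[Lemma~5.1.7]{M}, so there is no in-paper argument to compare against. What you wrote is precisely the standard route one finds in that reference: use the compact $\eta$-support $\{|\eta|\le \varepsilon_2(\gamma+|\xi|)\}$ together with the symbol bounds \eqref{charpsi2} to obtain, via integration by parts in $\eta$, the pointwise decay
\[
\bigl|\partial_x^\beta\partial_\xi^\alpha G^{\psi_\gamma}(x,\xi)\bigr|\le C_{\beta,\alpha,m}\,(\gamma+|\xi|)^{\,d+|\beta|-|\alpha|}\bigl(1+(\gamma+|\xi|)\,|x|\bigr)^{-m},
\]
and then integrate in $x$ with $m=d+1$ and $m=d+2$ to get \eqref{propG1} and \eqref{propG2} respectively. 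The only cosmetic point worth tightening is the passage from the family of bounds on $x^N\partial_x^\beta\partial_\xi^\alpha G$ to the single bound with weight $(1+\Lambda|x|)^{-m}$: it suffices to take $N=me_j$ for $j=1,\dots,d$ (so that $\sum_j|x_j|^m\ge c_d|x|^m$) together with $N=0$, rather than summing over all $|N|=m$. Either way the conclusion is the same, and the constants are $\gamma$-independent because those in \eqref{charpsi1}--\eqref{charpsi2} are.
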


Next, let $a\in L^\infty$. We associate to $a$ the classical pseudodifferential symbol
\begin{equation}\label{sigmaa}
\sigma_{a,\gamma}(x,\xi)= (\psi_\gamma(D_x,\xi)a)(x,\xi)= (G^{\psi_\gamma}(\cdot, \xi)\ast a)(x),
\end{equation}
and we define the paradifferential operator associate to $a$ as the classical pseudodifferential operator associated to $\sigma_{a,\gamma} $ (from now on, to avoid cumbersome notations, we will wright $\sigma_a$), i.e.
$$
T_a^\gamma u(x) = \sigma_a(D_x)u(x)= \frac{1}{(2\pi)^d}\int_{\R^d_\xi} \sigma_{a}(x,\xi)\hat u(\xi)\, d\xi.
$$
Remark that $T_a^1$ is the usual paraproduct operator
$$
T_a^1u = \sum_{k=0}^{+\infty} S_k a\Delta_{k+3}u,
$$
while, in the general case,
\begin{equation}\label{paraprod}
T_a^\gamma u = S_{\mu-1}aS_{\mu+2}u +\sum_{k=\mu}^{+\infty} S_k a\Delta_{k+3}u.
 \end{equation}
 with  $\mu$ equal to the integer part of  $\log_2 \gamma$.

In the following it will be useful to deal with Sobolev spaces on the parameter $\gamma\geq 1$.

\begin{definition} Let $\gamma\geq 1$ and $s\in \R$. 
We denote by $H^s_{\gamma}(\R^d)$ the set of tempered distributions $u$ such that
$$
\|u\|^2_{H^s_\gamma}=\int_{\R^d_\xi} (\gamma^2+|\xi|^2)^{s}|\hat u(\xi)|^2\,d\xi<+\infty.
$$
%\fra{In the exponent, $s$ and not $2s$, right?!}
\end{definition} 
Let us remark that $H^s_\gamma=H^s$ and there exists $C_\gamma\geq1$ such that, for all $u\in H^s$,
$$
\frac{1}{C_\gamma} \|u\|^2_{H^s}\leq \|u\|^2_{H^s_\gamma}\leq C_\gamma \|u\|^2_{H^s}.
$$
%%%%%%%%%%%%%%%%%%%%%%%%%%%%%%%%%%%%%%%%%%%%%%%%%%%%%
%%%%%%%%%%%%%%%%%%%%%%%%%%%%%%%%%%%%%%%%%%%%%%%%%%%%%
\subsection{Low regularity symbols and calculus}
%%%%%%%%%%%%%%%%%%%%%%%%%%%%%%%%%%%%%%%%%%%%%%%%%%%%%
%%%%%%%%%%%%%%%%%%%%%%%%%%%%%%%%%%%%%%%%%%%%%%%%%%%%%
As in \cite{CDSFM1} and \cite{CDSFM2}, it is important to deal with paradifferential operators having symbols with limited regularity in time and space.  

\begin{definition}\label{defsymbol} A symbol of order $m$ is a function $a(t,x,\xi,\gamma)$ which is locally bounded on $[0,\,T]\times \R^n\times\R^n\times[1,+\infty[$, of class $C^\infty$ with respect to $\xi$ such that, for all $\alpha\in\N^n$, there exists $C_\alpha>0$ such that, for all $(t,x,\xi,\gamma)$,
\begin{equation}\label{estsym}
|\partial_\xi^\alpha a(t,x,\xi,\gamma)|\leq C_\alpha(\gamma+|\xi|)^{m-|\alpha|}.
\end{equation}
\end{definition} 

We take now a symbol $a$ of order $m\geq 0$, Zygmund-continuos with respect to $t$ uniformly with respect to $x$ and Lipschitz-continuos with respect to $x$ uniformly with respect to $t$.
We smooth out $a$ with respect to time as done in (\ref{regintime}), and call $a_\varepsilon$ the smoothed symbol.
We consider the classical symbol $\sigma_{a_\varepsilon}$ obtained from $a_\varepsilon$ via (\ref{sigmaa}).
In what follows, the variable $t$ has to be thought to as a parameter. %We consider the classical symbol $\sigma_{a_\varepsilon}$ obtained from $a_\varepsilon$ via (\ref{sigmaa}).
\begin{proposition}\label{propsigma}
Under the previous hypotheses, one has:
$$
\begin{array}{lll}
|\partial_\xi^\alpha \sigma_{a_\varepsilon}(t,x,\xi,\gamma)|&\leq &C_\alpha(\gamma+|\xi|)^{m-|\alpha|},\\[0.4 cm]
|\partial_x^\beta\partial_\xi^\alpha \sigma_{a_\varepsilon}(t,x,\xi,\gamma)|&\leq &C_{\beta, \alpha}(\gamma+|\xi|)^{m-|\alpha|+|\beta|-1},\\[0.4 cm]
|\partial_\xi^\alpha \sigma_{\partial_t a_\varepsilon}(t,x,\xi,\gamma)|&\leq &\displaystyle{C_{\alpha}(\gamma+|\xi|)^{m-|\alpha|}\log(1+\frac{1}{\varepsilon}),}\\[0.4 cm]
|\partial_x^\beta\partial_\xi^\alpha \sigma_{\partial_t a_\varepsilon}(t,x,\xi,\gamma)|&\leq &\displaystyle{C_{\beta, \alpha}(\gamma+|\xi|)^{m-|\alpha|+|\beta|-1}\, \frac{1}{\varepsilon}},\\[0.4 cm]
|\partial_\xi^\alpha \sigma_{\partial^2_t a_\varepsilon}(t,x,\xi,\gamma)|&\leq &\displaystyle{C_{\alpha}(\gamma+|\xi|)^{m-|\alpha|}\,\frac{1}{\varepsilon},}\\[0.4 cm]
|\partial_x^\beta\partial_\xi^\alpha \sigma_{\partial^2_t a_\varepsilon}(t,x,\xi,\gamma)|&\leq &\displaystyle{C_{\beta, \alpha}(\gamma+|\xi|)^{m-|\alpha|+|\beta|-1}\, \frac{1}{\varepsilon^2}},
\end{array}
$$
where $|\beta|\geq 1$ and all the constants $C_{\alpha}$ and $C_{\beta, \alpha}$ don't depend on $\gamma$.
\end{proposition}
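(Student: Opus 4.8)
The plan is to exploit the representation (\ref{sigmaa}), that is
$\sigma_{a_\varepsilon}(t,x,\xi,\gamma)=\bigl(G^{\psi_\gamma}(\cdot,\xi)\ast a_\varepsilon(t,\cdot,\xi,\gamma)\bigr)(x)$ with convolution in the $x$ variable, together with the mapping properties (\ref{propG1})--(\ref{propG2}) of $G^{\psi_\gamma}$ and the time-regularisation estimates (\ref{estconv1})--(\ref{estconv3}). Since $t$ enters only as a parameter and the mollification (\ref{regintime}) acts on $t$ alone, one has $\partial_t^j\sigma_{a_\varepsilon}=\sigma_{\partial_t^j a_\varepsilon}=G^{\psi_\gamma}(\cdot,\xi)\ast\bigl(\partial_t^j a_\varepsilon\bigr)(t,\cdot,\xi,\gamma)$ for $j=0,1,2$, and in the $t$ variable $\partial_t^j a_\varepsilon=\rho_\varepsilon^{(j)}\ast_t a$. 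Thus the statement reduces to two routine ingredients: first, ``input'' bounds on $\partial_\xi^\alpha\bigl(\partial_t^j a_\varepsilon\bigr)$ and on its increments in $x$; second, ``output'' bounds obtained by convolving in $x$ with $G^{\psi_\gamma}$ and distributing the $\xi$-derivatives by the Leibniz rule.

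For the input bounds, one first records that the hypotheses on $a$ are to be read in the symbol sense: for every $\alpha$, $\partial_\xi^\alpha a$ is Zygmund in $t$ (uniformly in $x$) and Lipschitz in $x$ (uniformly in $t$) with weight $(\gamma+|\xi|)^{m-|\alpha|}$. Since $\rho_\varepsilon\ge 0$ has unit integral, time-convolution is an $L^\infty_t$-contraction, so $a_\varepsilon$ inherits both the symbol bounds (\ref{estsym}) of Definition \ref{defsymbol} and the $x$-Lipschitz bounds, uniformly in $t$. Applying (\ref{estconv1})--(\ref{estconv3}) to $s\mapsto\partial_\xi^\alpha a(s,x,\xi,\gamma)$ yields $\|\partial_\xi^\alpha\partial_t a_\varepsilon(t,\cdot,\xi,\gamma)\|_{L^\infty_x}\le C_\alpha(\gamma+|\xi|)^{m-|\alpha|}\log(1+1/\varepsilon)$ and $\|\partial_\xi^\alpha\partial_t^2 a_\varepsilon(t,\cdot,\xi,\gamma)\|_{L^\infty_x}\le C_\alpha(\gamma+|\xi|)^{m-|\alpha|}/\varepsilon$; moreover, writing
$\partial_\xi^\alpha\partial_t^j a_\varepsilon(t,x-z,\xi,\gamma)-\partial_\xi^\alpha\partial_t^j a_\varepsilon(t,x,\xi,\gamma)=\int\rho_\varepsilon^{(j)}(t-s)\bigl(\partial_\xi^\alpha a(s,x-z,\xi,\gamma)-\partial_\xi^\alpha a(s,x,\xi,\gamma)\bigr)\,ds$
and using the $x$-Lipschitz bound on $a$ pointwise in $s$ together with $\|\rho_\varepsilon^{(j)}\|_{L^1}=\varepsilon^{-j}\|\rho^{(j)}\|_{L^1}$, this $x$-increment is $\le C_\alpha\,\varepsilon^{-j}(\gamma+|\xi|)^{m-|\alpha|}|z|$, for $j=0,1,2$.

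For the output bounds, Leibniz gives $\partial_x^\beta\partial_\xi^\alpha\sigma_{\partial_t^j a_\varepsilon}=\sum_{\alpha'+\alpha''=\alpha}\binom{\alpha}{\alpha'}\bigl(\partial_x^\beta\partial_\xi^{\alpha'}G^{\psi_\gamma}\bigr)\ast\bigl(\partial_\xi^{\alpha''}\partial_t^j a_\varepsilon\bigr)$, all $x$-derivatives falling on $G^{\psi_\gamma}$. For $\beta=0$, estimate each term by Young's inequality using (\ref{propG1}) for $\partial_\xi^{\alpha'}G^{\psi_\gamma}$ (which contributes $(\gamma+|\xi|)^{-|\alpha'|}$) and the $L^\infty_x$ input bound on $\partial_\xi^{\alpha''}\partial_t^j a_\varepsilon$; summing the finitely many terms gives the first, third and fifth estimates, with time factors $1$, $\log(1+1/\varepsilon)$, $1/\varepsilon$ respectively. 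For $|\beta|\ge 1$, use that $\int_{\R^n}\partial_x^\beta\partial_\xi^{\alpha'}G^{\psi_\gamma}(z,\xi)\,dz=0$, since $G^{\psi_\gamma}(\cdot,\xi)$ is Schwartz in $x$ (as $\psi_\gamma(\cdot,\xi)$ is smooth and compactly supported in $\eta$) and an $x$-derivative of a Schwartz function integrates to zero; setting $b(t,x,\xi,\gamma):=\partial_\xi^{\alpha''}\partial_t^j a_\varepsilon(t,x,\xi,\gamma)$ and subtracting the value at $x$,
\[
\bigl(\partial_x^\beta\partial_\xi^{\alpha'}G^{\psi_\gamma}\bigr)\ast b\,(x)=\int_{\R^n}\partial_x^\beta\partial_\xi^{\alpha'}G^{\psi_\gamma}(z,\xi)\,\bigl(b(t,x-z,\xi,\gamma)-b(t,x,\xi,\gamma)\bigr)\,dz,
\]
and combining the $x$-increment bound from the previous step with (\ref{propG2}) --- which supplies the decisive extra factor $(\gamma+|\xi|)^{-1}$ --- one obtains $C_{\beta,\alpha}\,\varepsilon^{-j}(\gamma+|\xi|)^{m-|\alpha|+|\beta|-1}$, i.e.\ the second, fourth and sixth estimates. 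Since the constants in (\ref{propG1})--(\ref{propG2}) do not depend on $\gamma$, neither do the resulting ones.

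I do not expect a genuine obstacle: the proposition is essentially a bookkeeping exercise in paradifferential calculus with parameters. The two points that need care are interpreting the Zygmund-in-$t$ and Lipschitz-in-$x$ hypotheses on $a$ jointly with all $\xi$-derivatives and with the weights $(\gamma+|\xi|)^{m-|\alpha|}$, so that (\ref{estconv1})--(\ref{estconv3}) reproduce exactly the displayed powers of $1/\varepsilon$; and using the cancellation $\int\partial_x^\beta G^{\psi_\gamma}(\cdot,\xi)=0$ for $|\beta|\ge 1$, which is precisely what allows one to invoke (\ref{propG2}) in place of (\ref{propG1}) and thereby gain one power of $(\gamma+|\xi|)$ in the estimates involving $x$-derivatives.
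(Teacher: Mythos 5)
Your proposal is correct and follows essentially the same route as the paper: convolution representation of $\sigma_{a_\varepsilon}$ with $G^{\psi_\gamma}$, Young's inequality with (\ref{propG1}) together with the mollification bounds (\ref{estconv2})--(\ref{estconv3}) for the $\beta=0$ cases, and the cancellation $\int\partial_x^\beta G^{\psi_\gamma}(\cdot,\xi)\,dx=0$ combined with the $x$-Lipschitz increment and (\ref{propG2}) for $|\beta|\geq 1$. You are merely a bit more explicit than the paper (Leibniz in $\xi$, the $\|\rho_\varepsilon^{(j)}\|_{L^1}=\varepsilon^{-j}\|\rho^{(j)}\|_{L^1}$ bookkeeping), which the paper compresses into ``the other cases are proved similarly.''
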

\begin{proof}
We have
$$
\sigma_{a_\varepsilon}(t,x,\xi,\gamma)=(G^{\psi_\gamma}(\cdot, \xi)\ast a_\varepsilon(t,\cdot,\xi,\gamma))(x),
$$
so that the first inequality follows from (\ref{propG1}) and (\ref{estsym}).

Next, we remark that
\begin{equation}\label{intG}
\int \partial_{x_j}G^{\psi_\gamma}(x,\xi)\, dx= \int{\mathcal F}^{-1}_\eta (\eta_j \psi_\gamma(\eta,\xi))(z)\, dz=
(\eta_j\psi(\eta, \xi))_{|\eta=0}=0.
\end{equation}
Consequently, using also (\ref{propG2}),
$$
\begin{array}{lll}
\displaystyle{| \partial_{x_j}\sigma_{a_\varepsilon}(t,x,\xi,\gamma)|}&=&\displaystyle{ |\int \partial_{y_j}G^{\psi_\gamma}(y,\xi)
 (a_\varepsilon(t, x-y, \xi, \gamma)-a_\varepsilon(t, x, \xi, \gamma))\, dy|},\\[0.4cm]
 &\leq & \displaystyle{   C\int | \partial_{y_j} G^{\psi_\gamma}(y,\xi)|\, |y|\,dy \; (\gamma+|\xi|)^m,    }\\[0.4cm]
&\leq & \displaystyle{ C (\gamma+|\xi|)^m}.
 \end{array}
$$
The other cases of the second inequality can be proved similarly.

The third inequality is again a consequence of (\ref{propG1}), keeping in mind  (\ref{estconv2}). It is in fact possible to prove that
$$
|\partial_\xi^\alpha \partial_t a_\varepsilon (t,x,\xi,\gamma)|\leq C_\alpha(1+\log\frac{1}{\varepsilon})(\gamma+|\xi|)^{m-|\alpha|} .
$$

Next, considering again 
(\ref{intG}),
%the fact that
%$$
%\int \partial_{x_j}G^{\psi_\gamma}(x,\xi)\, dx=0,
%$$
we have
$$
\begin{array}{ll}
&\displaystyle{\partial_{x_j}\sigma_{\partial_t a_\varepsilon}(t,x,\xi,\gamma)}\\[0.3cm]
&=\displaystyle{ \int_{\R^n_y} \partial_{y_j}G^{\psi_\gamma}(y,\xi)
 (\partial_t a_\varepsilon(t, x-y, \xi, \gamma)-\partial_t a_\varepsilon(t, x, \xi, \gamma))\, dy},\\[0.5cm]
 &\leq  \displaystyle{   \int_{\R^n_y} \partial_{y_j}G^{\psi_\gamma}(y,\xi)\int_{\R_s}\frac{1}{\varepsilon^2}
 \rho ' (\frac{t-s}{\varepsilon})(a(s, x-y, \xi, \gamma)-  a(s, x, \xi, \gamma))\,ds\, dy
 }\\[0.5cm]
&\leq  \displaystyle{  \int_{\R_s}\frac{1}{\varepsilon^2}
 \rho ' (\frac{t-s}{\varepsilon}) \int_{\R^n_y} \partial_{y_j}G^{\psi_\gamma}(y,\xi)(a(s, x-y, \xi, \gamma)-  a(s, x, \xi, \gamma))\,dy\, ds
 }.
 \end{array}
$$
so that the fourth inequality easily follows.

The last two inequalities are obtained in similar way, using also  (\ref{estconv3}).
\end{proof}

To end this section it is worthy to recall some results on symbolic calculus. Again details can be found in \cite{CDSFM1}, \cite{CDSFM2} and \cite[Appendix B]{MZ}. 
\begin{proposition}[{\cite[Prop. 3.19]{CDSFM1}}]\label{propcalsym}
$\phantom.$
\begin{itemize}
\item[i)] Let $a$ be a symbol of order $m$ (see Def. \ref {defsymbol}). Suppose that $a$ is $L^\infty$ in the $x$ variable.
If we set  
$$
T_a u(x) =\sigma_a(D_x)u(x) = \frac{1}{(2\pi)^d}\int_{\R^d_\xi} \sigma_{a}(x,\xi, \gamma )\hat u(\xi)\, d\xi,
$$ 
then $T_a$ maps $H^{s}_\gamma$ into $H^{s-m}_\gamma$.
\item[ii)] Let $a$  and $b$ be two symbols of order $m$ and $m'$ respectively. Suppose that $a$ and $b$ are ${\rm Lip}$ in the $x$ variable. Then
$$
T_a\circ T_b=T_{ab}+R,
$$ 
and $R$ maps $H^{s}_\gamma$ into $H^{s-m-m'+1}_\gamma$.
\item[iii)] Let $a$ be a symbol of order $m$ which is ${\rm Lip}$ in the $x$ variable. Then, denoting by $T^*_a$ the $L^2$-adjoint operator of $T_a$, 
$$
T_a^\ast =T_{\oline a}+R,
$$
and $R$ maps $H^{s}_\gamma$ into $H^{s-m+1}_\gamma$.
\item[iv)] Let $a$ be a symbol of order $m$ which is ${\rm Lip}$ in the $x$ variable. Suppose 
$$
{\rm Re}\; a(x,\xi,\gamma)\geq \lambda_0 (\gamma+|\xi|)^m.
$$
with $\lambda_0>0$. Then there exists $\gamma_0\geq 1$, depending only on $\|a\|_{\rm Lip}$ and $\lambda_0$, such that, for all $\gamma\geq \gamma_0$ and for all $u\in H^\infty$, 
$$
{\rm Re}\;(T_au,u)_{L^2}\geq \frac{\lambda_0}{2}\|u\|^2_{H^{m/2}_\gamma}.
$$

\end{itemize}

\end{proposition}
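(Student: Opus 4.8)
\textbf{Proof strategy for Proposition \ref{propcalsym}.}

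The plan is to reduce everything to the standard symbolic calculus for classical pseudodifferential operators with symbols in the Hörmander class $S^m_{1,0}$ depending on the parameter $\gamma$, applied to the smoothed symbol $\sigma_{a}(x,\xi,\gamma)$. The key observation, already recorded in Proposition \ref{propsigma} (in its $\veps$-free version, i.e.\ when $a$ itself is only $L^\infty$ or $\mathrm{Lip}$ in $x$ and we do not regularize in time), is that $\sigma_a$ is genuinely $C^\infty$ in $x$ with controlled derivatives: if $a$ is merely $L^\infty$ in $x$, then $|\d_x^\beta \d_\xi^\alpha \sigma_a| \le C_{\beta,\alpha}(\gamma+|\xi|)^{m-|\alpha|+|\beta|}$ for $|\beta|\ge 0$, and if $a$ is $\mathrm{Lip}$ in $x$, the bound improves to $(\gamma+|\xi|)^{m-|\alpha|+|\beta|-1}$ for $|\beta|\ge1$ by the cancellation $\int \d_{x_j}G^{\psi_\gamma}(x,\xi)\,dx=0$ together with \eqref{propG2} — exactly the computation carried out in the proof of Proposition \ref{propsigma}. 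So the paradifferential operator $T_a^\gamma$ is an honest parameter-dependent $\Psi$DO whose symbol lies in a good class, and the four claims become instances of the calculus in \cite[Appendix B]{MZ} or \cite[Sect. 5.1]{M}.

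First, for (i): boundedness $H^s_\gamma\to H^{s-m}_\gamma$ is the Calder\'on--Vaillancourt theorem for parameter-dependent symbols, using only the $\d_\xi^\alpha$-bounds (and, if one wants, finitely many $\d_x^\beta$-bounds) of $\sigma_a$ from \eqref{estsym} and the first line of Proposition \ref{propsigma}; one checks that the constants are uniform in $\gamma\ge1$ because all the estimates are. Next, for (ii): the composition formula $\sigma_a\#\sigma_b = \sigma_a\sigma_b + r$ with $r\in S^{m+m'-1}$ is the standard asymptotic expansion, $r(x,\xi)\sim \sum_{|\alpha|\ge1}\frac{1}{\alpha!}\d_\xi^\alpha\sigma_a\,D_x^\alpha\sigma_b$, where the leading remainder term costs one derivative in $\xi$ on $\sigma_a$ (gain of $(\gamma+|\xi|)^{-1}$) and one derivative in $x$ on $\sigma_b$ (neutral since $\sigma_b$ is $\mathrm{Lip}$ in $x$, hence $\d_x\sigma_b\in S^{m'}$ by Proposition \ref{propsigma}); together with $T^\gamma_{ab}=T^\gamma_{\sigma_a\sigma_b}$ up to lower order, this gives $R=T_a T_b - T_{ab}$ of order $m+m'-1$. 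Claim (iii) is the same mechanism applied to the adjoint: $\sigma_{T_a^*} = \oline{\sigma_a} + r$ with $r\sim\sum_{|\alpha|\ge1}\frac1{\alpha!}\d_\xi^\alpha D_x^\alpha\oline{\sigma_a}$, and each term in the sum carries one $\d_\xi$ (gain $(\gamma+|\xi|)^{-1}$) and one $\d_x$ (neutral, $\mathrm{Lip}$), so $R=T_a^*-T_{\oline a}$ has order $m-1$.

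Finally, for (iv), the sharp G\aa rding inequality: assume without loss $m=0$ (replace $a$ by $a(\gamma+|\xi|)^{-m}$ and $u$ by $(\gamma+|D|)^{m/2}u$, which is an isometry $H^{m/2}_\gamma\to L^2$ up to harmless errors). With $\Re a \ge \lambda_0$, write $a = (\lambda_0 + b)$ with $\Re b\ge0$; the classical proof (Friedrichs symmetrization, or the square-root construction $a = q^*q + $ lower order for a symbol $q$ of order $0$) gives $\Re(T_a u,u)_{L^2}\ge \lambda_0\|u\|_{L^2}^2 - C\|u\|_{H^{-1/2}_\gamma}^2$, where the error term comes from a first-order remainder in the calculus and its constant $C$ depends only on finitely many symbol seminorms of $a$, i.e.\ on $\|a\|_{\mathrm{Lip}}$. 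Since $\|u\|_{H^{-1/2}_\gamma}^2 \le \gamma^{-1}\|u\|_{L^2}^2$, choosing $\gamma_0$ with $C\gamma_0^{-1}\le \lambda_0/2$ absorbs the error and yields $\Re(T_a u,u)_{L^2}\ge \frac{\lambda_0}{2}\|u\|_{L^2}^2$; undoing the reductions gives the stated $\frac{\lambda_0}{2}\|u\|^2_{H^{m/2}_\gamma}$. The main obstacle — and the only genuinely non-automatic point — is the bookkeeping of $\gamma$-uniformity throughout: one must check at every step that the Hörmander-class seminorms of $\sigma_a$, and hence the constants produced by Calder\'on--Vaillancourt, by the composition expansion, and by the G\aa rding symmetrization, are bounded independently of $\gamma\ge1$. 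This is ensured precisely by Proposition \ref{propsigma} and by the $\gamma$-independence of the constants $\veps_1,\veps_2,C_{\beta,\alpha}$ in \eqref{charpsi1}--\eqref{charpsi2} and in \eqref{propG1}--\eqref{propG2}; with that in hand the proof is a transcription of the corresponding statements in \cite[Appendix B]{MZ}, so we only sketch it and refer there and to \cite{CDSFM1}, \cite{CDSFM2} for the remaining details.
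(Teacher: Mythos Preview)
Your sketch is sound and matches the approach that underlies the cited references: the paper does not actually prove Proposition~\ref{propcalsym} but merely quotes it from \cite[Prop.~3.19]{CDSFM1}, pointing to \cite[Appendix~B]{MZ} for the details of the parameter-dependent calculus. Your outline --- reduce to classical $S^m_{1,0}$ calculus with parameter via the smoothed symbol $\sigma_a$, invoke Calder\'on--Vaillancourt for (i), the standard composition/adjoint expansions for (ii)--(iii), and a G\aa rding-type argument with $\gamma$-absorption for (iv) --- is exactly the route taken in those references, and your emphasis on $\gamma$-uniformity as the one point requiring care is well placed.

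One small remark on (ii): you write ``$T^\gamma_{ab}=T^\gamma_{\sigma_a\sigma_b}$ up to lower order'' without comment, but this step itself needs a word, since $T_{ab}$ has classical symbol $\sigma_{ab}$, not $\sigma_a\sigma_b$. The difference $\sigma_{ab}-\sigma_a\sigma_b$ is indeed of order $m+m'-1$ when $a,b$ are $\mathrm{Lip}$ in $x$ (this is again a cancellation argument of the type in \eqref{intG}--\eqref{propG2}), but it is a separate ingredient from the $\sigma_a\#\sigma_b$ expansion and should be mentioned. Apart from that, your proposal is a faithful summary of the argument the paper defers to the literature.
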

%%%%%%%%%%%%%%%%%%%%%%%%%%%%%%%%%%%%%%%%%%%%%%%%%%%
%%%%%%%%%%%%%%%%%%%%%%%%%%%%%%%%%%%%%%%%%%%%%%%%%%%%
%%%%%%%%%%%%%%%%%%%%%%%%%%%%%%%%%%%%%%%%%%%%%%%%%%%%
\section{Proof of Theorem \ref{theorem}} \label{s:proof}
%%%%%%%%%%%%%%%%%%%%%%%%%%%%%%%%%%%%%%%%%%%%%%%%%%%%%
%%%%%%%%%%%%%%%%%%%%%%%%%%%%%%%%%%%%%%%%%%%%%%%%%%%
%%%%%%%%%%%%%%%%%%%%%%%%%%%%%%%%%%%%%%%%%%%%%%%%%%%%
Also for the proof of the main result, we will closely follow the strategy implemented in \cite{CDSFM1} and \cite{CDSFM2}.

%%%%%%%%%%%%%%%%%%%%%%%%%%%%%%%%%%%%%%%%%%%%%%%%%%%%%
%%%%%%%%%%%%%%%%%%%%%%%%%%%%%%%%%%%%%%%%%%%%%%%%%%%%% 
\subsection{Approximate energy} \label{ss:approx}
%%%%%%%%%%%%%%%%%%%%%%%%%%%%%%%%%%%%%%%%%%%%%%%%%%%%%
%%%%%%%%%%%%%%%%%%%%%%%%%%%%%%%%%%%%%%%%%%%%%%%%%%%%%
First of all we regularize the coefficients $a_{jk}$ with respect to $t$ via (\ref{regintime}) and we obtain $a_{jk, \varepsilon}$. We consider the 
$0$-th order symbol
$$
\alpha_\varepsilon(t,x,\xi,\gamma)= (\gamma^2+|\xi|^2)^{-\frac{1}{2}}(\gamma^2+\sum_{j,k} a_{jk,\varepsilon}(t,x)\xi_j\xi_k)^{\frac{1}{2}}.
$$
We fix
$$
\varepsilon=2^{-\nu},
$$
and we write $\alpha_\nu$ and $a_{jk,\,\nu}$ instead of $\alpha_{2^{-\nu}}$ and $a_{jk,\,2^{-\nu}}$ respectively.
From \hbox{Prop. \ref{propcalsym}}, point {\it iv)}, we have that there exists $\gamma\geq 1$ such that, for all $w\in H^\infty$,
$$
\|T^\gamma_{\alpha_{\nu}^{-1/2}} w\|_{L^2} \geq \frac{\lambda_0}{2}\|w\|_{L^2}\qquad\text{and}\qquad 
\|T^\gamma_{\alpha_{\nu}^{1/2}(\gamma^2+|\xi|^2)^{1/2}} w\|_{L^2}\geq \frac{\lambda_0}{2}\|w\|_{H^1_\gamma},
$$
where $\lambda_0$ has been defined in \eqref{ellipticity cond}.
We remark that $\gamma$ depends only on $\lambda_0$ and $\sup_{j,k}\|a_{jk}\|_{\rm Lip}$, in particular $\gamma$ does not depend on $\nu$. We fix such a $\gamma$ (this means also that $\mu$ is fixed in (\ref{paraprod})) and from now on we will omit to write it when denoting the operator $T$ and the Sobolev spaces $H^s$.

We consider $u\in C^2([0,T], H^\infty)$. We have
$$
\partial_t^2 u= \sum_{j,k}\partial_j(a_{jk}(t,x)\partial_k u)+Lu=
\sum_{j,k}\partial_j(T_{a_{jk}}\partial_k u)+\tilde Lu,
$$
where 
$$
\tilde Lu= Lu+\sum_{j,k}\partial_j((a_{jk}-T_{a_{jk}})\partial_k u).
$$
We apply the operator $\Delta_\nu$ and we obtain
$$
\partial_t^2 u_\nu= \sum_{j,k}\partial_j(T_{a_{jk}}\partial_k u_\nu)
+ \sum_{j,k}\partial_j([\Delta_\nu, T_{a_{jk}}]\partial_k u)+ (\tilde L u)_\nu,
$$
where $u_\nu=\Delta_\nu u$, $(\tilde L u)_\nu=\Delta_\nu(\tilde L u)$ and $\displaystyle{[\Delta_\nu, T_{a_{jk}}]}$ is the commutator between the localization operator $\Delta_\nu$ and the paramultiplication operator $T_{a_{jk}}$.

We set
%\begin{equation}
$$
\begin{array}{ll}%\label{defvnu}
&\displaystyle{v_\nu(t,x)= T_{\alpha^{-1/2}_{\nu} }\partial _t u_\nu- T_{\partial_t( \alpha^{-1/2}_{\nu})} u_\nu},\\[0.6 cm]
&\displaystyle{w_\nu(t,x)= T_{\alpha^{1/2}_{\nu}(\gamma^2+|\xi|^2)^{1/2 }}\ u_\nu},\\[0.5 cm]
&\displaystyle{z_\nu(t,x)= u_\nu},
\end{array}
%\end{equation}
$$
and we define the approximate energy associated to the $\nu$-th component  as
%\begin{equation}\label{defenu}
$$
e_\nu(t)=\|v_\nu(t, \cdot)\|^2_{L^2}+\|w_\nu(t, \cdot)\|^2_{L^2}+\|z_\nu(t, \cdot)\|^2_{L^2}.
%\end{equation}
$$
We fix $\theta\in [0,\,1[$ and define the total energy
$$
%\begin{equation}\label{defen}
E_\theta(t)= \sum_{\nu=0}^{+\infty} \, 2^{-2\nu \theta} e_\nu(t).
%\end{equation}
$$
We remark that, as a consequence of Bernstein's inequalities,
$$
\|w_\nu\|^2_{L^2} \sim \|\nabla u_\nu\|^2_{L^2}\sim 2^{2\nu} \| u_\nu\|^2_{L^2}.
$$
Moreover, from (\ref{estconv2}) and, again, Bernstein's inequalities,
$$
 \|T_{\partial_t( \alpha^{-\frac{1}{2}}_{\nu})} u_\nu\|_{L^2}\leq C(\nu +1) \| u_\nu\|_{L^2} \leq C'\|w_\nu\|_{L^2},
$$
so that 
\begin{equation}\label{estdtunu}
\begin{array}{ll}
\|\partial_t u_\nu\|_{L^2}& \leq C  \|T_{ \alpha^{-1/2}_{\nu}} u_\nu\|_{L^2}\\[0.2 cm]
&\leq C(\|v_\nu\|_{L^2}+  \|T_{\partial_t( \alpha_{\nu}^{-1/2})} u_\nu\|_{L^2})\\[0.2 cm]
&\leq C(e_\nu(t))^{\frac{1}{2}}.
\end{array}
\end{equation}
We deduce that there exist constants $C_\theta$ and $C'_\theta$, depending only on $\theta$, such that
\begin{align*}
(E_\theta(0))^{\frac{1}{2}}&\leq C_\theta (\|\partial_tu(0)\|_{H^{-\theta}}+\|u(0)\|_{H^{1-\theta}}), \\
(E_\theta(t))^{\frac{1}{2}}&\geq C'_\theta (\|\partial_tu(t)\|_{H^{-\theta}}+\|u(t)\|_{H^{1-\theta}}).
\end{align*}

%%%%%%%%%%%%%%%%%%%%%%%%%%%%%%%%%%%%%%%%%%%%%%%%%%%%%
%%%%%%%%%%%%%%%%%%%%%%%%%%%%%%%%%%%%%%%%%%%%%%%%%%%%%
\subsection{Time derivative of the approximate energy}
%%%%%%%%%%%%%%%%%%%%%%%%%%%%%%%%%%%%%%%%%%%%%%%%%%%%%
%%%%%%%%%%%%%%%%%%%%%%%%%%%%%%%%%%%%%%%%%%%%%%%%%%%%%
We want to estimate the time derivative of $e_\nu$. 

Since
$$
\partial _t v_\nu = T_{\alpha_\nu^{-1/2}}\partial^2_t u_\nu - T_{\partial^2_t(\alpha_\nu^{-1/2})}u_\nu,
$$
we deduce
$$
%\begin{equation}\label{estdtvnu}
\begin{array}{ll}
&\displaystyle{\frac{d}{dt} \|v_\nu(t)\|^2_{L^2}}\\[0.4cm]
&\displaystyle{=2\,{\rm Re} \big( v_\nu, T_{\alpha_\nu^{-1/2}}\partial^2_t u_\nu\big)_{L^2} -2\,{\rm Re} \big( v_\nu, T_{\partial^2_t(\alpha_\nu^{-1/2})}u_\nu\big)_{L^2}
}\\[0.4cm]
&\displaystyle{=-2\,{\rm Re} \big( v_\nu, T_{\partial^2_t(\alpha_\nu^{-1/2})}u_\nu\big)_{L^2}
+ 2\,{\rm Re} \big(v_\nu, \sum_{j,k}T_{\alpha_\nu^{-1/2}}\partial_j(T_{a_{jk}}\partial_k u_\nu)\big)_{L^2} 
}\\[0.4cm]
&\displaystyle{\phantom{=}+ 2\,{\rm Re} \big(v_\nu, \sum_{j,k}T_{\alpha_\nu^{-1/2}}\partial_j([\Delta_\nu,\,T_{a_{jk}}]\partial_k u)\big)_{L^2} + 2\,{\rm Re} \big(v_\nu, T_{\alpha_\nu^{-1/2}}(\tilde L u)_\nu\big)_{L^2}.
}
\end{array}
%\end{equation}
$$
We have
$$
\left|2\,{\rm Re} \big(v_\nu, T_{\alpha_\nu^{-1/2}}(\tilde L u)_\nu\big)_{L^2}\right|\leq C(e_\nu)^{\frac{1}{2}}\, \|(\tilde L u)_\nu\|_{L^2},
$$
and, from the fifth inequality in Prop. \ref{propsigma},  
$$
\left|2\,{\rm Re} \big(  v_\nu, T_{\partial^2_t(\alpha_\nu^{-1/2})}u_\nu\big)_{L^2}\right|\leq C\,
\|v_\nu\|_{L^2}\, 2^\nu \|u_\nu\|_{L^2}\leq C \, e_\nu(t).
$$
Therefore, we obtain
\begin{equation}\label{estdtvnu}
\begin{array}{ll}
\displaystyle{\frac{d}{dt} \|v_\nu(t)\|^2_{L^2}}
&\displaystyle{= 2\,{\rm Re} \big(v_\nu, \sum_{j,k}T_{\alpha_\nu^{-1/2}}\partial_j(T_{a_{jk}}\partial_k u_\nu)\big)_{L^2} 
}\\[0.4cm]
&\displaystyle{\phantom{=+}+ 2\,{\rm Re} \big(v_\nu, \sum_{j,k}T_{\alpha_\nu^{-1/2}}\partial_j([\Delta_\nu,\,T_{a_{jk}}]\partial_k u)\big)_{L^2} }\\[0.4cm]
&\displaystyle{\phantom{=+++}+2\,{\rm Re} \big(v_\nu, T_{\alpha_\nu^{-1/2}}(\tilde L u)_\nu\big)_{L^2}
+ Q_1,
}
\end{array}
\end{equation}
with $|Q_1|\leq C e_\nu(t)$.

\medskip
Next
$$
\partial _t w_\nu = T_{\partial_t(\alpha_\nu^{1/2})(\gamma^2+|\xi|^2)^{1/2}} u_\nu + T_{\alpha_\nu^{1/2}(\gamma^2+|\xi|^2)^{1/2}} \partial_t u_\nu,
$$
so that
$$
\begin{array}{ll}
&\displaystyle{\frac{d}{dt} \|w_\nu(t)\|^2_{L^2}}\\[0.4cm]
&\displaystyle{=2\,{\rm Re} \big(T_{\partial_t(\alpha_\nu^{1/2})(\gamma^2+|\xi|^2)^{1/2}} u_\nu, w_\nu\big)_{L^2}
+2\,{\rm Re} \big( T_{\alpha_\nu^{1/2}(\gamma^2+|\xi|^2)^{1/2}} \partial_t u_\nu, w_\nu\big)_{L^2}
}\\[0.4cm]
&\displaystyle{=2\,{\rm Re} \big(T_{\alpha_\nu(\gamma^2+|\xi|^2)^{1/2}} T_{-\partial_t(\alpha_\nu^{-1/2})}u_\nu, w_\nu\big)_{L^2}+ 2\,{\rm Re} \big( R_1 u_\nu, w_\nu\big)_{L^2}}\\[0.4cm]
&\displaystyle{\phantom{=}+2\,{\rm Re} \big( T_{\alpha_\nu(\gamma^2+|\xi|^2)^{1/2}} T_{\alpha_\nu^{-1/2}}\partial_t u_\nu, w_\nu\big)_{L^2} + 2\,{\rm Re} \big( R_2 u_\nu, w_\nu\big)_{L^2}
}\\[0.4cm]
&\displaystyle{=2\,{\rm Re} \big(v_\nu , T_{\alpha_\nu(\gamma^2+|\xi|^2)^{1/2}}w_\nu\big)_{L^2}
+ 2\,{\rm Re} \big(  v_\nu, R_3 w_\nu\big)_{L^2}
}\\[0.4cm]
&\displaystyle{\phantom{=}+ 2\,{\rm Re} \big( R_1 u_\nu, w_\nu\big)_{L^2}+ 2\,{\rm Re} \big( R_2 u_\nu, w_\nu\big)_{L^2}
}\\[0.4cm]
&\displaystyle{=2\,{\rm Re} \big(v_\nu , T_{\alpha_\nu^{-1/2}} T_{\alpha_\nu^{3/2}(\gamma^2+|\xi|^2)^{1/2}}w_\nu\big)_{L^2}
+ 2\,{\rm Re} \big( v_\nu, R_4 w_\nu\big)_{L^2}
}\\[0.4cm]
&\displaystyle{\phantom{=}+ 2\,{\rm Re} \big(  v_\nu, R_3 w_\nu\big)_{L^2}+2\,{\rm Re} \big( R_1 u_\nu, w_\nu\big)_{L^2}+ 2\,{\rm Re} \big( R_2 u_\nu, w_\nu\big)_{L^2}
}\\[0.4cm]
&\displaystyle{=2\,{\rm Re} \big(v_\nu , T_{\alpha_\nu^{-1/2}} T_{\alpha_\nu^2(\gamma^2+|\xi|^2)}u_\nu\big)_{L^2}
}\\[0.4cm]
&\displaystyle{\phantom{=}+ 2\,{\rm Re} \big( v_\nu , T_{\alpha_\nu^{-1/2}} R_5 u_\nu\big)_{L^2}+ 2\,{\rm Re} \big( v_\nu, R_4 w_\nu\big)_{L^2}
}\\[0.4cm]
&\displaystyle{\phantom{=}+ 2\,{\rm Re} \big(  v_\nu, R_3 w_\nu\big)_{L^2}+2\,{\rm Re} \big( R_1 u_\nu, w_\nu\big)_{L^2}+ 2\,{\rm Re} \big( R_2 u_\nu, w_\nu\big)_{L^2}.
}
\end{array}
%\end{equation}
$$
It is a straightforward computation, from the results of  symbolic calculus recalled in Prop. \ref{propcalsym}, to verify that all the operators $R_1$, $R_2$, $R_3$, $R_4$ and $R_5$ are $0$-th
order operators. Consequently,
\begin{equation}\label{estdtwnu}
\frac{d}{dt} \|w_\nu(t)\|^2_{L^2}=  2\,{\rm Re} \big(v_\nu , T_{\alpha_\nu^{-1/2}} T_{\alpha_\nu^2(\gamma^2+|\xi|^2)}u_\nu\big)_{L^2} + Q_2,
\end{equation}
with $|Q_2|\leq C e_\nu(t)$.

\medskip
Finally, from (\ref {estdtunu}),
\begin{equation}\label{estdtznu}
\frac{d}{dt} \|z_\nu(t)\|^2_{L^2}\leq |2\,{\rm Re} \big(u_\nu, \partial_t u_\nu\big)_{L^2}|\leq C e_\nu(t).
\end{equation}

Now we pair the first term in right hand side part of (\ref{estdtvnu}) with the first  term in right hand side part of  (\ref{estdtwnu}). We obtain
$$
\begin{array}{lll}
|2\,{\rm Re} \big(v_\nu, \sum_{j,k}T_{\alpha_\nu^{-1/2}}\partial_j(T_{a_{jk}}\partial_k u_\nu)\big)_{L^2}+
2\,{\rm Re} \big(v_\nu , T_{\alpha_\nu^{-1/2}} T_{\alpha_\nu^2(\gamma^2+|\xi|^2)}u_\nu\big)_{L^2}|\\[0.3cm]
\hfil \leq C\, \|v_\nu\|_{L^2}\,  \|\zeta_\nu\|_{L^2},
\end{array}
$$
where
$$
\begin{array}{lll}
\zeta_\nu=&\displaystyle{T_{\alpha_\nu^2(\gamma^2+|\xi|^2)}u_\nu +\sum_{j,k}\partial_j(T_{a_{jk}}\partial_k u_\nu)
}\\[0.3cm]
&\displaystyle{=T_{\gamma^2 +\sum_{j,k}a_{jk,\nu}\xi_j\xi_k }u_\nu +\sum_{j,k}\partial_j(T_{a_{jk}}\partial_k u_\nu)
}\\[0.3cm]
&\displaystyle{=T_{\gamma^2} u_\nu  +\sum_{j,k}( T_{a_{jk,\nu}\xi_j\xi_k }u_\nu + T_{\partial_j a_{jk}} \partial_k u_\nu - T_{a_{jk}\xi_j\xi_k } u_\nu).
}
\end{array}
$$
We have
$$
\|\sum_{j,k} T_{\partial_j a_{jk}} \partial_k u_\nu\|_{L^2} \leq C\sup_{j,k} \|a_{jk}\|_{\rm Lip} \| \nabla u_\nu\|_{L^2}
\leq C (e_\nu(t))^{\frac{1}{2}},
$$
and, from Bernstein's inequalities and (\ref{estconv1}),
$$
\|\sum_{j,k} T_{(a_{jk,\nu}-a_{jk})\xi_j\xi_k} u_\nu\|_{L^2} \leq C \sup_{j,k} \|a_{jk}\|_{\rm Lip}\, 2^{-\nu}\, \| \nabla^2 u_\nu\|_{L^2}\leq C (e_\nu(t))^{\frac{1}{2}}.
$$
From this we deduce
$$
 \|\zeta_\nu\|_{L^2}\leq C (e_\nu(t))^{\frac{1}{2}}.
$$

Summing up, from (\ref {estdtvnu}), (\ref {estdtwnu}) and (\ref {estdtunu}) we get
\begin{equation}\label{estdtenu}
\begin{array}{ll}
\displaystyle{\frac{d}{dt} \,e_\nu(t)}&\displaystyle{ \leq \;C_1 e_\nu(t)+ C_2 (e_\nu(t))^{\frac{1}{2}}\,\|(\tilde L u)_\nu\|_{L^2}}\\[0.3cm] 
&\qquad\qquad \qquad\displaystyle{+ |2\,{\rm Re} \big(v_\nu, \sum_{j,k}T_{\alpha_\nu^{-1/2}}\partial_j([\Delta_\nu,\,T_{a_{jk}}]\partial_k u)\big)_{L^2}|}.
\end{array}
\end{equation}
%%%%%%%%%%%%%%%%%%%%%%%%%%%%%%%%%%%%%%%%%%%%%%%%%%%%%
%%%%%%%%%%%%%%%%%%%%%%%%%%%%%%%%%%%%%%%%%%%%%%%%%%%%%
\subsection{Commutator estimate}
%%%%%%%%%%%%%%%%%%%%%%%%%%%%%%%%%%%%%%%%%%%%%%%%%%%%%
%%%%%%%%%%%%%%%%%%%%%%%%%%%%%%%%%%%%%%%%%%%%%%%%%%%%%%%%
We want to estimate
$$
|\sum_{j,k} 2\,{\rm Re} \big(v_\nu,T_{\alpha_\nu^{-1/2}}\partial_j([\Delta_\nu,\,T_{a_{jk}}]\partial_k u)\big)_{L^2}|.
$$
We remark that
$$
\begin{array}{ll}
\displaystyle{[\Delta_\nu,\,T_{a_{jk}}]w}& \displaystyle{=\Delta_\nu(S_{\mu-1}a_{jk} \,S_{\mu+2}w) +  \Delta_\nu(\sum_{h=\mu}^{+\infty} S_h a_{jk}\, \Delta_{h+3}w)}\\[0.5cm]
&\phantom{=+} \displaystyle{-S_{\mu-1}a_{jk}\, S_{\mu+2}(\Delta_\nu w) -\sum_{h=\mu}^{+\infty} S_h a_{jk} \,
\Delta_{h+3}(\Delta_\nu w)
}\\[0.5cm]
& \displaystyle{=\Delta_\nu(S_{\mu-1}a_{jk} \,S_{\mu+2}w) -S_{\mu-1}a_{jk}\, \Delta_\nu (S_{\mu+2}w)}\\[0.3cm]
&\phantom{=+} \displaystyle{+\sum_{h=\mu}^{+\infty} \Delta_\nu(S_h a_{jk}\, \Delta_{h+3}w) -\sum_{h=\mu}^{+\infty} S_h a_{jk} \,
\Delta_\nu (\Delta_{h+3} w)
}\\[0.5cm]
& \displaystyle{= [\Delta_\nu,\,S_{\mu-1}a_{jk}]\,S_{\mu+2}w+\sum_{h=\mu}^{+\infty} [\Delta_\nu,\,S_h a_{jk}]
\, \Delta_{h+3} w},
\end{array}
$$
where we recall that $\mu$ is a fixed constant (depending on $\gamma$, which has been chosen at the beginning of Subsection \ref{ss:approx}). Hence we have
$$
\begin{array}{ll}
\partial_j([\Delta_\nu,&\!\!\!\!T_{a_{jk}}]\partial_k u)\\[0.2cm]
&\displaystyle{=\partial_j([\Delta_\nu,\,S_{\mu-1}a_{jk}]\,\partial_k(S_{\mu+2} u))+
\partial_j(\sum_{h=\mu}^{+\infty} [\Delta_\nu,\,S_h a_{jk}]
\,\partial_k( \Delta_{h+3} u)).}
\end{array}
$$

Consider first
$$
\partial_j([\Delta_\nu,\,S_{\mu-1}a_{jk}]\,\partial_k(S_{\mu+2} u)).
$$
The support of the Fourier transform of $[\Delta_\nu,\,S_{\mu-1}a_{jk}]\,\partial_k(S_{\mu+2} u)$ is contained in $\{|\xi|\leq 2^{\mu+4}\}$ and $[\Delta_\nu,\,S_{\mu-1}a_{jk}]\,\partial_k(S_{\mu+2} u)$ is identically $0$ if $\nu\geq \mu+5$. 
%\fra{Daniele, qui ho rimosso la frase su $\mu$: scusami, rileggendo la nuova versione, mi sono accorto che, in effetti, il fatto che $\mu$ fosse fissato era gi\`a stato detto pi\`u volte, ed in modo chiaro. Non avevo letto bene nella versione precedente, scusami!}
From Bernstein's inequalities and \cite[Th. 35]{CoM} we deduce that
$$
\| \partial_j([\Delta_\nu,\,S_{\mu-1}a_{jk}]\,\partial_k(S_{\mu+2} u))\|_{L^2}\leq C\, 2^\mu \,\sup_{j,k}\|a_{jk}\|_{\rm Lip}\, \|S_{\mu+2} u\|_{L^2}.
$$
We have
$$
\begin{array}{ll}
\displaystyle{\sum_{\nu=0}^{+\infty} 2^{-2\nu\theta} 
|\sum_{j,k} 2\,{\rm Re} \big(v_\nu,T_{\alpha_\nu^{-1/2}}\partial_j([\Delta_\nu,\,S_{\mu-1}a_{jk}]\,\partial_k(S_{\mu+2} u))\big)_{L^2}
|}\\[0.3cm]
\qquad\qquad\qquad\quad\leq \displaystyle{C \, 2^\mu \,\sup_{j,k}\|a_{jk}\|_{\rm Lip}\, \sum_{\nu=0}^{\mu+4} 
2^{-2\nu\theta}\|v_\nu\|_{L^2} ( \sum_{h=0}^{\mu+2} \|u_h\|_{L^2})}\\[0.3cm]
\qquad\qquad\qquad\quad\leq \displaystyle{C \, 2^{\mu +(\mu+4)\theta}\, \sup_{j,k}\|a_{jk}\|_{\rm Lip}\, \sum_{\nu=0}^{\mu+4} 
2^{-\nu\theta}\|v_\nu\|_{L^2} \sum_{h=0}^{\mu+4} 2^{-h\theta}\|u_h\|_{L^2}}\\[0.3cm]
\qquad\qquad\qquad\quad\leq \displaystyle{C%_{\mu, \theta} 
\,\sup_{j,k}\|a_{jk}\|_{\rm Lip}\, 
\sum_{h=0}^{\mu+4} 2^{-2\nu\theta}\, e_\nu(t).
}
\end{array}
$$

Consider then
$$
\partial_j(\sum_{h=\mu}^{+\infty}\, [\Delta_\nu,\,S_h a_{jk}]
\,\partial_k( \Delta_{h+3} u)).
$$
Looking at the support of the Fourier transform, it is possible to see that $$[\Delta_\nu,\,S_h a_{jk}]
\,\partial_k( \Delta_{h+3} u)$$ is identically $0$ if $|h+3-\nu|\geq 3$. As a consequence, the sum over $h$ is reduced to at most $5$ terms: $\partial_j([\Delta_\nu,\,S_{\nu-5} a_{jk}] \,\partial_k( \Delta_{\nu-2} u))$, \dots,
$\partial_j([\Delta_\nu,\,S_{\nu-1} a_{jk}] \,\partial_k( \Delta_{\nu+2} u))$. Each of these terms has the support of the Fourier transform contained in the ball $\{ |\xi|\leq 2^{\nu+4}\}$. 

We consider the term $\partial_j([\Delta_\nu,\,S_{\nu-3} a_{jk}] \,\partial_k( \Delta_{\nu} u))$: for the other terms the estimate will be similar. Again by  Bernstein's inequalities and \cite[Th. 35]{CoM} we infer 
$$
\| \partial_j([\Delta_\nu,\,S_{\nu-3}a_{jk}]\,\partial_k(\Delta_\nu u))\|_{L^2}\leq C\, 2^\nu \,\sup_{j,k}\|a_{jk}\|_{\rm Lip}\, \|\Delta_{\nu} u\|_{L^2},
$$
and then
$$
\begin{array}{ll}
\displaystyle{| 
\sum_{j,k} 2\,{\rm Re} \big(v_\nu,T_{\alpha_\nu^{-1/2}}\partial_j(\sum_{h=\mu}^{+\infty}\, [\Delta_\nu,\,S_h a_{jk}]
\,\partial_k( \Delta_{h+3} u))\big)_{L^2}
|}\\[0.3cm]
\quad\leq \displaystyle{C \,\sup_{j,k}\|a_{jk}\|_{\rm Lip}\, 
(e_{\nu-2}(t)+e_{\nu-1}(t)+e_{\nu}(t)+e_{\nu+1}(t)+e_{\nu+2}(t)).
}
\end{array}
$$
Thus we have
$$
\begin{array}{ll}
\displaystyle{\sum_{\nu=0}^{+\infty} 2^{-2\nu\theta} 
|\sum_{j,k} 2\,{\rm Re} \big(v_\nu,T_{\alpha_\nu^{-1/2}}\partial_j(\sum_{h=\mu}^{+\infty}\, [\Delta_\nu,\,S_h a_{jk}]
\,\partial_k( \Delta_{h+3} u))\big)_{L^2}
|}\\[0.3cm]
\qquad\qquad\qquad\qquad\qquad\qquad\qquad\qquad\leq \displaystyle{C  \,\sup_{j,k}\|a_{jk}\|_{\rm Lip}\, 
\sum_{\nu=0}^{+\infty} 2^{-2\nu\theta} e_\nu(t).
}
\end{array}
$$

As a conclusion
\begin{equation}\label{estcomm}
\sum_{\nu=0}^{+\infty} 2^{-2\nu\theta} 
|\sum_{j,k} 2\,{\rm Re} \big(v_\nu,T_{\alpha_\nu^{-1/2}}\partial_j([\Delta_\nu,\,T_{a_{jk}}]\partial_k u)\big)_{L^2}|
\leq C_3 \sum_{\nu=0}^{+\infty} 2^{-2\nu\theta} e_\nu(t),
\end{equation}
where $C_3$ depends on $\gamma$, $\theta$ and $\sup_{j,k}\|a_{jk}\|_{\rm Lip}$.
%%%%%%%%%%%%%%%%%%%%%%%%%%%%%%%%%%%%%%%%%%%%%%%%%%
%%%%%%%%%%%%%%%%%%%%%%%%%%%%%%%%%%%%%%%%%%%%%%%%%%%%%
\subsection{Final estimate}
From (\ref{estdtenu}) and (\ref{estcomm}) we obtain
$$
\begin{array}{ll}
\displaystyle{\frac{d}{dt} E_\theta(t)}&\displaystyle{\leq (C_1+C_3) \sum_{\nu=0}^{+\infty} \, 2^{-2\nu\theta} e_\nu(t)
+C_2 \sum_{\nu=0}^{+\infty} \, 2^{-2\nu\theta} (e_\nu(t))^{\frac{1}{2}}\| (\tilde Lu(t))_\nu\|_{L^2}}\\[0.5cm]
&\displaystyle{\leq (C_1+C_3) \sum_{\nu=0}^{+\infty} \, 2^{-2\nu\theta} e_\nu(t)
+C_2 \sum_{\nu=0}^{+\infty} \, 2^{-2\nu\theta} (e_\nu(t))^{\frac{1}{2}}\| (Lu(t))_\nu\|_{L^2}}\\[0.5cm]
&\displaystyle{\qquad\qquad\
+C_2 \sum_{\nu=0}^{+\infty} \, 2^{-2\nu\theta} (e_\nu(t))^{\frac{1}{2}}\| \big(
\sum_{j,k} \partial_j( (a_{jk}-T_{a_{jk}})\partial_ku)\big)_\nu\|_{L^2}}.
\end{array}
$$
We have
$$
\begin{array}{ll}
\displaystyle{\sum_{\nu=0}^{+\infty} \, 2^{-2\nu\theta} (e_\nu(t))^{\frac{1}{2}}\| \big(
\sum_{j,k} \partial_j( (a_{jk}-T_{a_{jk}})\partial_ku)\big)_\nu\|_{L^2}}\\[0.5cm]
\displaystyle{\quad \leq \Big(\sum_{\nu=0}^{+\infty} \, 2^{-2\nu\theta} e_\nu(t) \Big)^{\frac{1}{2}}
\Big(\sum_{\nu=0}^{+\infty} \, 2^{-2\nu\theta}\| \big(
\sum_{j,k} \partial_j( (a_{jk}-T_{a_{jk}})\partial_ku)\big)_\nu\|^2_{L^2} \Big)^{\frac{1}{2}}}.
\end{array}
$$
From (\ref{charSob}) we deduce
$$
\sum_{\nu=0}^{+\infty} \, 2^{-2\nu\theta}\| \big(
\sum_{j,k} \partial_j( (a_{jk}-T_{a_{jk}})\partial_ku)\big)_\nu\|^2_{L^2} \leq C\, 
\| 
\sum_{j,k} \partial_j( (a_{jk}-T_{a_{jk}})\partial_ku)\|^2_{H^{-\theta}}
$$
Now, using \cite[Prop. 3.5]{DSP} in the case $\theta\in\;]0,\,1[$ and \cite[Th. 5.2.8]{M}  in the case $\theta =0$,
$$
\| \sum_{j,k} \partial_j( (a_{jk}-T_{a_{jk}})\partial_ku)\|^2_{H^{-\theta}}\leq C (\sup_{j,k}\|a_{jk}\|_{\rm Lip}) \|u(t)\|_{H^{1-\theta}},
$$
so that
$$
\sum_{\nu=0}^{+\infty} \, 2^{-2\nu\theta} (e_\nu(t))^{\frac{1}{2}}\| \big(
\sum_{j,k} \partial_j( (a_{jk}-T_{a_{jk}})\partial_ku)\big)_\nu\|_{L^2}\leq C_4 E_\theta(t),
$$
and finally
$$
\frac{d}{dt} E_\theta(t)\leq C ( E_\theta(t) + (E_\theta(t))^\frac{1}{2}\|Lu(t)\|_{H^{-\theta}}).
$$
The energy estimate (\ref{en est}) easily follows from this last inequality and Gr\"onwall Lemma.
%%%%%%%%%%%%%%%%%%%%%%%%%%%%%%%%%%%%%%%%%%%%%%%%%%%%%
%%%%%%%%%%%%%%%%%%%%%%%%%%%%%%%%%%%%%%%%%%%%%%%%%%%%%

\end{document}